\DeclareMathOperator{\spn}{span}
\DeclareMathOperator{\sgn}{sgn}
\DeclareMathOperator{\diag}{diag}
\newcommand{\bel}[1]{\begin{equation*}\label{#1}}
	\newcommand{\be}{\begin{equation}}
		\newcommand{\ba}{\begin{eqnarray}}
			\newcommand{\ea}{\end{eqnarray}}
		\newcommand{\qe}{\end{equation}}
	\newcommand{\N}{{\mathbb N}}
	\newcommand{\rank}{\mathrm{rank}}
	\newcommand{\Hmm}[1]{\leavevmode{\marginpar{\tiny%
				$\hbox to 0mm{\hspace*{-0.5mm}$\leftarrow$\hss}%
				\vcenter{\vrule depth 0.1mm height 0.1mm width \the\marginparwidth}%
				\hbox to
				0mm{\hss$\rightarrow$\hspace*{-0.5mm}}$\\\relax\raggedright #1}}}
	\newtheorem{theorem}{Theorem}[section]
	\newtheorem{lemma}[theorem]{Lemma}
	\newtheorem{corollary}[theorem]{Corollary}
	\newtheorem{remark}[theorem]{Remark}
	\newtheorem{prop}[theorem]{Proposition}
	\newtheorem{example}[theorem]{Example}
	\newcommand{\tm}{\begin{theorem}}
		\newcommand{\tmd}{\end{theorem}}
	\newcommand{\co}{\begin{corollary}}
		\newcommand{\cod}{\end{corollary}}
	\newcommand{\prp}{\begin{prop}}
		\newcommand{\prpd}{\end{prop}}
\begin{document}

		\title[Existence theory for exponential nonlinear equations]{Existence theory for elliptic equations of general exponential nonlinearity on finite graphs}
		
		\author{Bobo Hua}
		\address{Bobo Hua: School of Mathematical Sciences, LMNS, Fudan University, Shanghai 200433, China; Shanghai Center for Mathematical Sciences, Fudan University, Shanghai 200433, China}
		\email{bobohua@fudan.edu.cn}
		
		\author{Linlin Sun}
		\address{Linlin Sun: School of Mathematics and Computational Science, Xiangtan University, Hunan 411105, China}
		\email{sunll@xtu.edu.cn}
		
		\author{Jiaxuan Wang}
		\address{Jiaxuan Wang: School of Mathematical Sciences, Fudan University, Shanghai 200433, China}
		\email{jiaxuanwang21@m.fudan.edu.cn}

		
		\begin{abstract}

			We study semilinear elliptic equations on finite graphs with fully general exponential nonlinearities, thereby extending classical equations such as the Kazdan-Warner and Chern-Simons equations. A key contribution of this work is the development of new techniques for deriving a priori estimates in this generalized setting, which reduce the original finite graph to a graph with only two vertices. This reduction enables us to explicitly compute the Brouwer degree and to establish the existence of solutions when the degree is nonzero. Furthermore, using the method of sub- and supersolutions, we also prove the existence of solutions in cases where the Brouwer degree vanishes. 
			
			

		\end{abstract}
		\maketitle

		Mathematics Subject Classification 2020: 35G20, 35J61, 35J91, 35R02.

		
		\par
		\maketitle

		\bigskip
		
			
			\section{Introduction}
			
			On a closed Riemann surface, for solving the prescribed Gaussian curvature problem Kazdan and Warner  \cite{kazdan1974curvature} initiated the study of the well-known Kazdan-Warner equation 
			\begin{equation}\label{eq:kw}
				\begin{aligned}
					-\Delta u=he^u-f,
				\end{aligned}
			\end{equation} where $h,f$ are smooth functions and $\Delta$ is the Laplace-Beltrami operator, which is a semilinear elliptic equation of critical nonlinearity. They proved the existence of solutions using variational methods. It has been extensively investigated in the literature, for instance in \cite{chen1993qualitative,chen1993gaussian,ding1997differential,ding1999existence}. Such exponential nonlinearity also appears in the well-known Chern-Simons equation
			\begin{equation}\label{eq:cs}
				\begin{aligned}
					-\Delta u=\lambda e^u\left(e^u-1\right)+4\pi\sum_{i=1}^{k_0}m_i\delta_{p_i},
				\end{aligned}
			\end{equation}	
			where $\lambda>0$, $m_i\in\mathbb{N}$, and $\delta_{p_i}$ denotes the Dirac mass at the point $p_i$. The Chern-Simons vortices were found in \cite{jaffee1980vortices,hong1990multivortex}, and people were interested in proving the existence of solutions, which are classified into topological and nontopological solutions, see e.g \cite{ronggang1991existence,spruck1995topological,caffarelli1995vortex} and \cite{chen1994nonlinear,chae2000existence,chan2002non,choe2011existence} respectively.
			


			In recent years, the study of partial differential equations on graphs has received increasing attention and has played an important role in many fields.  Grigor'yan, Lin, and Yang \cite{grigor2016kazdan} studied the Kazdan-Warner equation \eqref{eq:kw} on finite graphs, and derived the existence results of solutions, which were extended in \cite{ge2017kazdan,ge2018kazdan}. Other results on the Kazdan-Warner equation can be found, for example, in references \cite{fang2018class,ge2018note,keller2018kazdan,ge2020p,liu2020multiple,camilli2022note,pinamonti2022existence,sun2022brouwer,li2023existence,zhang2024fractional}. In \cite{huang2020existence}, Huang, Lin, and Yau proved the existence of solutions to the Chern-Simons equation \eqref{eq:cs} on finite graphs. Later, Hou and Sun \cite{hou2022existence} derived solutions to \eqref{eq:cs} for the critical case, and studied the generalized Chern-Simons-Higgs equation. The first author and third author, together with Huang, \cite{hua2023existence} proved the existence of topological solutions to \eqref{eq:cs} on (infinite) lattice graphs. For more related studies, we refer the readers to \cite{huang2021mean,lu2021existence,chao2022existence,hu2022existence,lin2022calculus,chao2023multiple,gao2023existence,hou2024topological,hou2024solutions,li2024topological,sun2024sinh}.

			In this paper, we consider the semilinear elliptic equation of general exponential nonlinearity
			\begin{equation}\label{eq:m1}
				\begin{aligned}
					-\Delta u(x)=\sum_{i=0}^nf_i(x)e^{iu(x)}
				\end{aligned}
			\end{equation}
			for a positive integer $n$ on a finite connected weighted graph $G$, where $\Delta$ is the graph Laplacian, and $f_i$ are functions on vertices. This extends the Kazdan-Warner equation, the Chern-Simons equation on graphs and others. As a special case the equation $$-\Delta u=\frac{1}{m+1}(e^{(m+1)u}-e^{2(m+1)u}),\quad m\in \N,$$ appeared in the continuous case \cite[(2.10)]{chen1994nonlinear}, while our equation is a polynomial of the exponential of full generality.

			Topological degree theory is a powerful tool in the study of partial differential equations. Recently, the second author and Wang \cite{sun2022brouwer} first applied this method on finite graphs and computed the Brouwer degree of the Kazdan-Warner equation \eqref{eq:kw}, extending the results in \cite{chen2003topological}. Utilizing this tool, Liu \cite{liu2022brouwer} studied the mean field equation, and Li, the second author and Yang \cite{li2024topological} solved the Chern-Simons-Higgs models. In this paper,  we establish the existence of solutions of \eqref{eq:m1} on a finite graph by computing the Brouwer degree and the method of sub- and supersolutions. 
			
			Consider a finite connected graph  $G=(V,E)$ with vertex set $V$ and edge set $E$. Let 
			$$m:V\rightarrow \mathbb{R}^+,\ x\mapsto m(x)$$
			be a vertex weighted function and
			$$\omega: V\times V\rightarrow [0,+\infty), \ \{x,y\}\mapsto \omega_{xy}$$
			be a weight function, which satisfies $\omega_{xy}=\omega_{yx}$ and $\omega_{xy}>0$ if and only if $\{x,y\}\in E,$ denoted by $x\sim y$. The graph $G=(V,E,\omega,m)$ is called a weighted graph. For a weighted graph $G$ and a function $u:V\rightarrow \mathbb{R}$, we define the Laplace operator by
			$$\Delta u(x)=\frac{1}{m(x)}\sum_{y\sim x}\omega_{xy}\left(u(y)-u(x)\right).$$
			For a function $f$ on $G$, we define the average of  $f$ over $V$ as
			$$\overline{f}:=	\frac{\sum_{x\in V}m(x)f(x)}{\sum_{x\in V}m(x)}.$$
			
			In this paper we study the following equation equivalent to \eqref{eq:m1}
			\begin{equation}\label{eq:maineq}
				\begin{aligned}
					-\Delta u(x)=F_n(x,u)=\sum_{i=1}^nf_i(x)e^{iu(x)}+c
				\end{aligned}
			\end{equation}
			on a weighted graph $G=(V,E,\omega,m)$, where $c$ is a constant. For convenience, we write 
			$$F_n(x,y)=\sum_{i=1}^nf_i(x)e^{iy}+c,$$
			where $x\in V,\ y\in\mathbb{R}$. We define the leading coefficient $D_{F_n}(x)$ as
			\begin{equation*}
				\begin{aligned}
					D_{F_n}(x):=\left\{
					\begin{aligned}
						f_n(x),\ \ &f_n(x)\neq0,\\
						f_k(x),\ \ &f_n(x)=\dots=f_{k+1}(x)=0,\  f_k(x)\neq0,\\
						0,\ \ &f_n(x)=\dots=f_{1}(x)=0,
					\end{aligned}
					\right.
				\end{aligned}
			\end{equation*}
			and set
			$$q(F_n):=\max_{x\in V}\lim_{y\rightarrow+\infty}F_n(x,y).$$ We distinguish the cases with $q(F_n)<+\infty$ and $q(F_n)=+\infty$ as follows:
			
			\begin{enumerate}[(1)]
				\item  $q(F_n)<+\infty$ if and only if $D_{F_n}\leqslant0$.   
				\item $q(F_n)=+\infty$ if and only if $D_{F_n}$ is positive somewhere.  
				
			\end{enumerate}

			First, we prove the following theorem, which is a discrete analog of Brezis and Merle's result \cite{brezis1991uniform}.
			
			\tm\label{tm:alternative}
			Let $G=(V,E,\omega,m)$ be a finite connected weighted graph. For  $m\in\mathbb{N},$ let $u_m$ be a solution to the following equation on $G$
			$$-\Delta u_m(x)=\sum_{i=1}^nf_{i,m}(x)e^{iu_m(x)}+c_m,$$
			and 
			$$\lim_{m\rightarrow+\infty}f_{i,m}=f_i,\ \lim_{m\rightarrow+\infty}c_m=c.$$
			Then there exists a subsequence $\{u_{m_k}\}$, which satisfies the following alternative:
			
			\begin{enumerate}[(1)]
				\item either $\{u_{m_k}\}$ is uniformly bounded, or            
				\item $u_{m_k}\rightarrow-\infty$ uniformly on $G$, or
				
				\item $u_{m_k}\rightarrow+\infty$ uniformly on $G$.
			\end{enumerate}
			\tmd
			
			Under certain assumptions, an a priori estimate can be derived, implying that the first result of Theorem ~\ref{tm:alternative} holds. 
			
			\tm\label{tm6.1}
			Let $u$ be a solution to \eqref{eq:maineq} on a finite connected weighted graph $G=(V,E,\omega,m)$. Assume that 
			$$\sum_{i=1}^n\max_{x\in V}|f_i(x)|+|c|\leqslant K,$$	    
			and either $$|c|\geqslant K^{-1}$$
			or
			$$c=0, \ |\overline{f_1}|\geqslant K^{-1}$$ for some constant $K>0$.
			Suppose that there exists some vertex $x_0\in V$ such that one of the following conditions holds:		     
			\begin{enumerate}[(i)]
				\item $D_{F_n}(x_0)\geqslant\frac{1}{K}$.
				\item $D_{F_n}(x_0)\leqslant-\frac{1}{K}$. Moreover, for every $x\in V$,  $f_i(x)\leqslant0$ for every $1\leqslant i\leqslant n$, or
				$D_{F_n}(x)\leqslant-\frac{1}{K}$.

			\end{enumerate}
			Then there exists a constant $C_n$ depending only on $K,V,\omega,n$ such that the solution $u$ satisfies 
			$$\max_{x\in V}|u(x)|\leqslant C_n.$$
			\tmd
			
			For the proof of the theorem, since the nonlinearity of the equation is very general and complicated, we develop a new method to classify different cases, leading to a priori estimate. We use the signature of $D_{F_n}$ at a single vertex $x_0$ and the structure of the polynomial in the nonlinearity, i.e. the asymptotics of $F_n(x,y)$  as $y\to +\infty$, to control the upper bound of the function $u.$
			
			By  Theorem ~\ref{tm6.1}, the following corollary holds.
			
			\begin{corollary}\label{co1.3}
				Consider equation \eqref{eq:maineq} on a finite connected weighted graph $G=(V,E,\omega,m)$. We assume that $\overline{f_1}\neq0$ when $c=0$. Then the solutions to \eqref{eq:maineq} are uniformly bounded.
			\end{corollary}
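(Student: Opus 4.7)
The plan is to deduce the corollary from Theorem \ref{tm6.1} by choosing the constant $K$ large enough and selecting an appropriate vertex $x_0$, with a case split dictated by the sign structure of $D_{F_n}$ on $V$.

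First I would dispose of the degenerate case in which $f_i \equiv 0$ for every $1 \leq i \leq n$. Then \eqref{eq:maineq} reduces to $-\Delta u = c$, and summing both sides against the vertex measure $m$ yields $0 = c\sum_{x\in V} m(x)$, forcing $c=0$; but this gives $\overline{f_1}=0$ and $c=0$ simultaneously, contradicting the hypothesis. So in this degenerate case either no solution exists (and the statement is vacuous) or the hypothesis is violated, and we may henceforth assume $D_{F_n}\not\equiv 0$.

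Next I would construct a single $K$ realizing every hypothesis of Theorem \ref{tm6.1}. Take $K\geq \sum_{i=1}^n \max_{x\in V}|f_i(x)| + |c|$ to control the coefficients; require $K\geq |c|^{-1}$ when $c\neq 0$, and $K\geq |\overline{f_1}|^{-1}$ when $c=0$ (using $\overline{f_1}\neq 0$ from the hypothesis) to satisfy the $c$-dichotomy. For the structural hypothesis there are two cases. If $D_{F_n}(x_0)>0$ at some $x_0\in V$, further require $K\geq D_{F_n}(x_0)^{-1}$, putting us in hypothesis (i). Otherwise $D_{F_n}\leq 0$ everywhere, and by nondegeneracy $D_{F_n}(x_0)<0$ at some $x_0$. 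At every vertex $x$ with $D_{F_n}(x)=0$ we have $f_1(x)=\cdots=f_n(x)=0$ directly from the definition of $D_{F_n}$, so trivially $f_i(x)\leq 0$ for all $i$; at every vertex $x$ with $D_{F_n}(x)<0$, enlarging $K$ beyond $\bigl(\min\{|D_{F_n}(x)| : D_{F_n}(x)<0\}\bigr)^{-1}$ (finite because $V$ is finite) yields $D_{F_n}(x)\leq -K^{-1}$. This places us in hypothesis (ii).

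In either branch Theorem \ref{tm6.1} supplies a constant $C_n$ such that $\max_{x\in V}|u(x)|\leq C_n$ for every solution $u$ of \eqref{eq:maineq}, which is the asserted uniform bound. There is no substantive obstacle; the only delicate point is the verification of hypothesis (ii), where the equivalence $D_{F_n}(x)=0\Longleftrightarrow f_1(x)=\cdots=f_n(x)=0$ is precisely what lets the vertices with vanishing leading coefficient be absorbed into the ``$f_i\leq 0$'' alternative rather than into the quantitative bound $D_{F_n}(x)\leq -K^{-1}$.
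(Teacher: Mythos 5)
Your proof is correct and takes essentially the same route as the paper, which states the corollary as an immediate consequence of Theorem~\ref{tm6.1} without further argument; you simply make explicit the choice of $K$ and the exhaustive case split on the sign of $D_{F_n}$ (positive somewhere, identically zero, or nonpositive and negative somewhere) that the paper leaves implicit. The key observation that $D_{F_n}(x)=0$ forces $f_1(x)=\cdots=f_n(x)=0$, so such vertices fall under the ``$f_i\leqslant 0$'' alternative of hypothesis (ii), is exactly the right way to close the verification.
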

			
			An important fact follows from Corollary ~\ref{co1.3} that the Brouwer degree is well-defined. The Brouwer degree associated with equation \eqref{eq:maineq} is denoted by $d_{f_n,\dots, f_1,c}$, and see Subsection ~\ref{sub2.2} for details.
			
			In each case of Theorem \ref{tm6.1}, by Lemma ~\ref{lm3} we can reduce the graph to a graph with two vertices, without changing the value of the Brouwer degree. This key reduction makes the computation of the topological degree $d_{f_n,\dots, f_1,c}$ feasible.  Each case can be further reduced to conditions involving only $c,\overline{f_1}$ and the polynomial $F_n(x,y),$ and we prove the following theorem.
			
			\tm\label{1.2}
			Suppose that $u$ is a solution to \eqref{eq:maineq} on a finite connected weighted graph $G=(V,E,\omega,m)$, and assume that $\overline{f_1}\neq0$ when $c=0$. Then we have
			\begin{equation*}
				\begin{aligned}
					d_{f_n,\dots, f_1,c}=\left\{
					\begin{aligned}
						1,\ \ \   &q(F_n)<+\infty,\ \mathrm{with\ either}\  c>0,\ \text{or}\ c=0 \ \text{and}\ \overline{f_1}>0, \\
						-1,\ \ \ &q(F_n)=+\infty,\ \mathrm{with\ either}\ c<0,\ \text{or}\ c=0 \ \text{and}\ \overline{f_1}<0,\\
						0,\ \ \ &\mathrm{otherwise}.\\
					\end{aligned}
					\right.
				\end{aligned}
			\end{equation*}
			
			\tmd
			
			By the Kronecker existence, it immediately follows that if $d_{f_n,\dots, f_1,c}\neq0$, then there exists at least one solution to \eqref{eq:maineq}. In the remaining cases, we assume that there exists some $f_i$ (with $1\leqslant i\leqslant n$) that is positive somewhere. Otherwise, it is easy to obtain the nonexistence of solutions to \eqref{eq:maineq}.  We divide the remaining cases into two cases.
			\begin{enumerate}[(A)]
				\item $q(F_n)=+\infty$, with either $c>0,\ \text{or}\ c=0 \ \text{and}\ \overline{f_1}\geqslant0.$
				
				\item $q(F_n)<+\infty$, with either $c<0,\ \text{or}\ c=0 \ \text{and}\ \overline{f_1}\leqslant0.$
			\end{enumerate}		     			     
			Then we derive the following existence result.

			\tm\label{1.3}
			Let $G=(V,E,\omega,m)$ be a finite connected weighted graph. Suppose that in equation \eqref{eq:maineq}, at least one $f_i$, $1\leqslant i\leqslant n$, is positive somewhere. There exists a positive constant $c_n$ depending on each $f_i$, $1\leqslant i\leqslant n$, and for each $2\leqslant i\leqslant n$, there exists a function $f_i^*$ depending on other $f_j$, $1\leqslant j\leqslant n$ and $j\neq i$, such that equation \eqref{eq:maineq} is solvable if one of the following conditions holds:
			\begin{enumerate}[($i^*$)]

				\item $\overline{f_1}<0$ in case $(A)$ and $c\in (0,c_n]$.
				
				\item $\overline{f_1}>0$ in case $(B)$ and $c\in [-c_n,0)$.
				
				\item $\overline{f_1}\geqslant0$ in case $(A)$, $f_i\leqslant f_i^*$ for some integer $2\leqslant i\leqslant n$, and $c\in [0,c_n]$.
				
				\item $\overline{f_1}\leqslant0$ in case $(B)$, $f_i\geqslant f_i^*$ for some integer $2\leqslant i\leqslant n-1$, and $c\in [-c_n,0]$.
				
			\end{enumerate}
			Further, assume that one of the above conditions holds. Then the equation admits at least two solutions if $c\in (-c_n,0)$ in case $(A)$ or $c\in (0,c_n)$ in case $(B)$.
			\tmd
			\begin{remark}
				In Theorem ~\ref{1.3}, the functions $f_i^*$ contribute to understanding when solutions can be expected to exist for the last two conditions. In these cases, the equation doesn't always admit a solution, as illustrated in Example~\ref{ex1}, ~\ref{ex2}, ~\ref{ex3}. 
			\end{remark}		    

			Let us introduce the proof strategies of our main theorems. First, we prove Theorem ~\ref{tm:alternative}.  It suffices to consider the case $\{u_m\}$ is not uniformly bounded. We divide into two cases. If there exists a subsequence $\{u_m\}$ has a uniform upper bound, then by equation \eqref{eq:maineq}, $|\Delta u_m|$ is uniformly bounded, which yields that $u_m\rightarrow-\infty$. This proves the assertion (2) in the theorem.  If any subsequence $\{u_m\}$ doesn't have a uniform upper bound,  then we want to prove the assertion (3). It is noted that in this assumption, by equation \eqref{eq:maineq}$, |\Delta u_{m,-}|$ is uniformly bounded, where $u_{m,-}=\max\{-u_m,0\}$. This implies that $\{u_m\}$ has a uniform lower bound. Then
			we consider $\{y_m\}\subset V$ such that 
			$$u_m(y_m)=\min_{y\in V}u_m(y).$$
			If $\{u_m(y_m)\}$  has a uniform upper bound, then the uniform boundedness of $\{u_m(y_m)\}$ can be extended from one vertex to its neighbors, and further to the entire graph by the connectedness, which contradicts our assumption. Thus, $\{u_m\}$ must tend to $+\infty$ uniformly.
			
			For the proof of Theorem  ~\ref{tm6.1}, we use Theorem~\ref{tm:alternative}, and it suffices to rule out the latter two cases in Theorem~\ref{tm:alternative}. Based on the assumptions, it is not difficult to obtain a lower bound for $u$. Then we can assume a solution subsequence $\{u_m\}$ which tends to $+\infty$ uniformly.  For case $(i)$, we consider equation \eqref{eq:maineq} on $x_0$, and we immediately obtain a contradiction. For case $(ii)$, note that $F_n(x,u_m)$ is uniformly bounded, which implies that $|\Delta u_m|$ is uniformly bounded on $G$. Considering equation \eqref{eq:maineq} on $x_0$,  we also obtain a contradiction.
			
			To prove Theorem~\ref{1.2}, our key approach is Lemma~\ref{lm3}. This lemma allows us to reduce the set of vertices satisfying $f_1=f_2=\dots=f_n=0$ without changing the Brouwer degree. For simplicity, we only introduce the proof for the case where $c\neq0$. By Theorem~\ref{tm6.1}, we can make the homotopy transformation in each case. Note that for case $(i)$ and $f_i\leqslant0$ in case $(ii)$, a priori estimates only impose restrictions on certain $f_i(x_0)$ which attains $D_{F_n}(x_0)$. Thus, we 
			transform $f_j$ (for $i\neq j$) to be zero, and transform $f_i$ to be zero on all vertices except $x_0$. Applying Lemma~\ref{lm3}, we can reduce the graph to a two-vertex graph, which is crucial for the computation of the Brouwer degree. For case $(ii)$, observe that the conditions don't impose any restrictions on the vertices where $f_i\geqslant0$. Thus, by transforming the values on these vertices to zero, we are reduced to the case where $f_i\leqslant0$, $1\leqslant i\leqslant n$. 
			
			Finally, substantial effort is devoted to establishing the existence results in Theorem ~\ref{1.3}. We begin by analyzing  the existence of solutions when $c$ is sufficiently close to zero. The key step in the proof is to construct a supersolution for case $(A^*)$ and a subsolution for case $(B^*)$. For conditions $(i^*)$ and $(ii^*)$, we choose a function $v$ satisfying
			$$\Delta v=\sum_{i=1}^na^{i-1}\left(f_i-\overline{f_i}\right),$$
			where $a>0$ is a sufficiently small constant. Consider the function $av+\ln a$, which can serve as a supersolution for case $(A^*)$ and a subsolution for case $(B^*)$ if $c$ is sufficiently small. For conditions $(iii^*)$ and $(iv^*)$, we consider a solution $u^*$ to the equation
			$$-\Delta u^*=He^{u^*},$$
			where $H$ changes sign and $\overline{H}<0$. Note that equation \eqref{eq:maineq} is equivalent to 
			$$-\Delta u=\sum_{i=1}^nf_ie^{-ik}e^{iu}+c,$$
			where $k$ is a constant. By adjusting the parameter $k$ and choosing $c$ sufficiently small, we can let 
			$$\sum_{j=1}^{n}f_je^{-ik}e^{(i-1)u^*}+ce^{-u^*}=H,$$
			where $f_i=f_i^*$ for a certain $i$. The function $f_i^*$ is determined by the above equality and thus depends on $H$. As a result, $u^*$ can serve as a supersolution for case $(A^*)$ and a subsolution for case $(B^*)$.
			Moreover, we analyze the behavior of $c_n$ and the number of solutions. If $c$ is sufficiently large, then for case $(A)$, we consider the vertex where $u$ attains its minimum. It then follows that 
			$u$ must be bounded from below; for instance, we may assume  $u\geqslant 0$. Consider the equation on the vertex $x_0$, where $x_0$ is mentioned in Theorem~\ref{tm6.1}, and we find the equation fails to hold for large $c$. For case $(B)$, for sufficiently negative $c$ we observe that $F_n(x,u)<0$, which implies nonexistence of solutions. This ensures that $c_n$ is bounded. If $0<c<c_n$, we assume that there exists only one solution. Now we introduce the singular homology theory and use $q$-th critical group in \cite{chang1993infinite}. Theorem 3.2 in \cite{chang1993infinite} yields that $d_{f_n,\dots,f_1,c}=1$, which contradicts the condition $d_{f_n,\dots,f_1,c}=0$.
			
			The paper is organized as follows. In Section~\ref{se2}, we introduce the setting of graphs and topological degree, and we prove some key lemmas. In Section~\ref{se3}, we establish Theorem~\ref{tm:alternative}. In the remaining part of Section~\ref{se3}, and Section ~\ref{s4}, \ref{se5}, we consider the case $n=2$, and we prove Theorem~\ref{tm6.1} in Section~\ref{se3}, Theorem~\ref{1.2} in Section~\ref{s4}, Theorem~\ref{1.3} in Section~\ref{se5}. In Section~\ref{se6}, we extend the results to general $n$, and complete the proof of the main theorems.
			
			\textbf{Acknowledgements.} We thank Genggeng Huang for helpful discussions and suggestions for the paper. B. Hua is supported by NSFC, No. 12371056. B. Hua and J. Wang are supported by Shanghai Science and Technology Program [Project No. 22JC1400100].

			\section{Preliminaries}\label{se2}
			\subsection{The setting of graphs and some lemmas}
			\
			In this subsection, we introduce the setting of weighted graphs, and list some useful lemmas which are discrete versions of some well-known results.
			
			We consider a finite, connected, weighted graph $G=(V,E,\omega,m)$.  For convenience, we write
			$$\sum_{V}f:=\sum_{x\in V}m(x)f(x).$$
			
			For a function $u:V\rightarrow \mathbb{R}$, we define the Laplace operator by
			$$\Delta u(x)=\frac{1}{m(x)}\sum_{y\in V}\omega_{xy}\left(u(y)-u(x)\right).$$
			Green's identity holds on $G$
			$$-\sum_{V}\Delta uv= \sum_{V}\Gamma(u,v)=\frac{1}{2} \sum_{x,y\in V}\omega_{xy}\left(u(y)-u(x)\right)\left(v(y)-v(x)\right),$$
			where 
			$$\Gamma(u,v)(x)=\frac{1}{2} \sum_{y\in V}\frac{\omega_{xy}}{m(x)}\left(u(y)-u(x)\right)\left(v(y)-v(x)\right).$$
			We denote 
			$$|\nabla u|(x)=\sqrt{\Gamma(u,u)(x)}.$$
			For any function $u:V\rightarrow \mathbb{R}$, we define the $l^p$ norm by
			\begin{equation*}
				\|u\|_{l^p(V)}=\left\{
				\begin{aligned}
					& \left(\sum_{V}|u|^p\right)^{\frac{1}{p}},\ 1\leqslant p<\infty,\\
					& \sup_{x\in V}|u(x)|, \ p=\infty.\\
				\end{aligned}
				\right.
			\end{equation*}
			Thus we can define the Sobolev space $W^{1,p}(V)$ with the norm
			$$\|u\|_{W^{1,p}(V)}:=\|u\|_{l^{p}(V)}+\||\nabla u|\|_{l^{p}(V)}.$$
			
			In this paper, we study the following semilinear equation of general exponential nonlinearity
			\begin{equation*}
				\begin{aligned}
					-\Delta u(x)=F_n(x,u)=\sum_{i=0}^nf_i(x)e^{iu(x)}
				\end{aligned}
			\end{equation*}
			on $G$, where $f_n$ is not always zero. We can assume that  $f_0$ equals to a constant $c=\overline{f_0}$, where $$\overline{f_0}=\frac{1}{|V|}\sum_{V}f_0,$$
			and 
			$$|V|=\sum_{V}1=\sum_{x\in V}m(x).$$
			Since by replacing $u$ with $w=u-v$, where $v$ satisfies
			$$-\Delta v=f_0-\overline{f_0},$$
			we have the new equation
			$$-\Delta w(x)=\sum_{i=1}^n\widetilde{f_i}(x)e^{iw(x)}+c, $$
			where $\widetilde{f_i}=f_ie^{2v}$, and has the same sign as $f_i$. Thus, in the following part, we only consider $f_0$ to be a constant.
			
			To prove the main results, we need the following lemmas.
			
			\begin{lemma}[Elliptic estimate]\label{lm1}
				For any function $u:V\rightarrow \mathbb{R}$, there exists a constant $C_1$ such that
				$$\max_{x\in V}u(x)-\min_{x\in V}u(x)\leqslant C_1\max_{x\in V}|\Delta u(x)|,$$
				where $C_1$ depends only on $|V|,\omega,m$.
			\end{lemma}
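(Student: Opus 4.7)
The plan is to reduce the asserted inequality to an $l^2$ estimate via a Poincar\'e-type inequality and then upgrade to an $l^\infty$ bound using the finiteness of $V$. Since both sides of the desired inequality are invariant under adding a constant to $u$, I first replace $u$ with $u-\overline{u}$ and assume $\overline{u}=0$.

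The main tool is the Poincar\'e inequality on the finite connected weighted graph $G$: because $G$ is connected, the kernel of $-\Delta$ (which is self-adjoint with respect to the measure $m$ by Green's identity) is spanned by the constants, so the first nonzero eigenvalue $\lambda_1$ of $-\Delta$ is strictly positive. Consequently there is a constant $C_P=1/\lambda_1>0$, depending only on $(V,E,\omega,m)$, such that $\sum_V w^2 \leq C_P\sum_V|\nabla w|^2$ for every function $w$ on $V$ with $\overline{w}=0$. Applying this to $u$ together with Green's identity $\sum_V|\nabla u|^2 = -\sum_V u\,\Delta u$ and Cauchy--Schwarz gives
\[
\sum_V u^2 \leq C_P\sum_V|\nabla u|^2 = -C_P\sum_V u\,\Delta u \leq C_P\Bigl(\sum_V u^2\Bigr)^{1/2}\Bigl(\sum_V (\Delta u)^2\Bigr)^{1/2},
\]
hence $\bigl(\sum_V u^2\bigr)^{1/2}\leq C_P\bigl(\sum_V(\Delta u)^2\bigr)^{1/2}$.

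Next I pass from $l^2$ to $l^\infty$ using the finiteness of $V$. Setting $m_{\min}:=\min_{x\in V}m(x)>0$, one has $m_{\min}\max_V u^2\leq\sum_V u^2$ and $\sum_V(\Delta u)^2\leq |V|\max_V|\Delta u|^2$, so $\max_V |u|\leq C_P\sqrt{|V|/m_{\min}}\,\max_V|\Delta u|$. Finally, because $\overline{u}=0$ forces $u$ to take both nonnegative and nonpositive values, $\max_V u-\min_V u\leq 2\max_V|u|$, and the claim follows with $C_1=2C_P\sqrt{|V|/m_{\min}}$, a constant depending only on $|V|,\omega,m$.

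The only non-trivial ingredient is the Poincar\'e inequality, i.e.\ the strict positivity of the spectral gap $\lambda_1(-\Delta)>0$; this is standard on any finite connected weighted graph and is really just a consequence of connectedness together with the finite-dimensional spectral theorem. Everything else is Cauchy--Schwarz and the elementary $l^\infty$--$l^2$ comparison available in a finite-dimensional space.
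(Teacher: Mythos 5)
Your proof is correct. Every step checks out: the normalization $\overline{u}=0$ is legitimate since both sides of the inequality are unchanged by adding a constant; the spectral gap $\lambda_1(-\Delta)>0$ on a finite connected weighted graph follows from Green's identity (which shows $-\Delta$ is self-adjoint and positive semi-definite for the measure $m$) together with $\ker\Delta=\spn\{1\}$; the chain Poincar\'e $\to$ Green $\to$ Cauchy--Schwarz yields $\|u\|_{l^2}\leqslant C_P\|\Delta u\|_{l^2}$; and the passage to $l^\infty$ via $m_{\min}$ and $|V|=\sum_x m(x)$, plus the observation that $\overline{u}=0$ forces $\max u\geqslant 0\geqslant\min u$, gives the stated oscillation bound with an explicit constant.

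Your route is, however, more explicit than what the paper does. The paper offers no computation at all: it cites two references and remarks that the estimate follows because all norms on a finite-dimensional space are equivalent --- implicitly, that $\Delta$ restricted to the mean-zero functions is an invertible linear map on a finite-dimensional space, so its inverse is automatically bounded in any norm. That soft argument is shorter but produces no constant. Your spectral-gap version is a quantitative instantiation of the same underlying fact (connectedness forces $\ker\Delta$ to be the constants), and it has the added benefit of identifying $C_1=2\lambda_1^{-1}\sqrt{|V|/m_{\min}}$ explicitly, which the abstract norm-equivalence argument cannot do. Both are valid; yours is self-contained where the paper defers to the literature.
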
 
			
			This lemma is proved in \cite{li2024topological,sun2022brouwer}. Note that in a finite-dimensional space, different norms are equivalent, so it is not difficult to derive the above lemma.
			
			To study the existence of the solution to \eqref{eq:maineq}, we also need the method of sub- and supersolutions. Consider the equation
			\begin{equation}\label{eq:general}
				\begin{aligned}
					-\Delta u=f(x,u)
				\end{aligned}
			\end{equation}
			on $G$, where 
			$$f:V\times \mathbb{R}\rightarrow \mathbb{R}$$
			is a smooth function, and we define the following functional
			$$J(u)=\sum_{V}\left(\frac{1}{2}|\nabla u|^2-F(\cdot,u(\cdot))\right),$$ 
			where $\frac{\partial F}{\partial u}=f$. We call a function $\phi$ supersolution (resp. subsolution) of this equation, if 
			$$-\Delta \phi(x)\geqslant(resp. \leqslant)f(x,\phi(x)).$$
			\begin{lemma}[the method of sub- and supersolutions]\label{lm2}
				Suppose that $\phi_1,\phi_2$ are subsolution and supersolution of \eqref{eq:general}, which satisfy $\phi_1\leqslant \phi_2$. Then any minimizer of $J$ in $\{u:V\rightarrow\mathbb{R}:\phi_1\leqslant u\leqslant \phi_2\}$ solves \eqref{eq:general}.
			\end{lemma}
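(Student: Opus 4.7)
The plan is to follow the classical sub/supersolution argument, which becomes particularly clean in the discrete setting. First I would observe that since $V$ is finite, the admissible set $K=\{u:\phi_1\leqslant u\leqslant \phi_2\}$ is a compact box in $\mathbb{R}^{|V|}$ and $J$ is continuous on it, so a minimizer $u^{*}\in K$ exists. The remaining task is to verify that $-\Delta u^{*}(x)=f(x,u^{*}(x))$ at every $x\in V$.

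The main tool is a one-sided first variation at each individual vertex. For fixed $x_0\in V$ and the indicator function $\varphi=\mathbf{1}_{x_0}$, Green's identity gives
$$\left.\frac{d}{dt}J(u^{*}+t\varphi)\right|_{t=0}=-m(x_0)\bigl[\Delta u^{*}(x_0)+f(x_0,u^{*}(x_0))\bigr].$$
I would then split into cases according to which constraints are active at $x_0$. If $\phi_1(x_0)<u^{*}(x_0)<\phi_2(x_0)$, then $u^{*}+t\varphi\in K$ for $|t|$ small, so the derivative vanishes and the Euler--Lagrange equation holds at $x_0$. If $u^{*}(x_0)=\phi_1(x_0)<\phi_2(x_0)$, only $t\geqslant 0$ is admissible, so the one-sided derivative is $\geqslant 0$, yielding $-\Delta u^{*}(x_0)\geqslant f(x_0,u^{*}(x_0))$. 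For the reverse inequality, the nonnegative function $u^{*}-\phi_1$ attains the value $0$ at $x_0$, so directly from the formula for the graph Laplacian, $-\Delta(u^{*}-\phi_1)(x_0)\leqslant 0$; combined with the subsolution inequality $-\Delta\phi_1(x_0)\leqslant f(x_0,\phi_1(x_0))=f(x_0,u^{*}(x_0))$, this closes the equality. The case $u^{*}(x_0)=\phi_2(x_0)>\phi_1(x_0)$ is entirely symmetric, using the supersolution property of $\phi_2$ and the fact that $\phi_2-u^{*}\geqslant 0$ vanishes at $x_0$.

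The remaining subtlety, and what I expect to be the main obstacle, is the degenerate situation when both constraints are simultaneously active at $x_0$, i.e.\ $\phi_1(x_0)=u^{*}(x_0)=\phi_2(x_0)$, where no nonzero variation is admissible. I would handle this by applying the same touching-point comparison to the three nonnegative differences $u^{*}-\phi_1$, $\phi_2-u^{*}$ and $\phi_2-\phi_1$, each of which vanishes at $x_0$: the chain
$$f(x_0,u^{*}(x_0))=f(x_0,\phi_1(x_0))\geqslant -\Delta\phi_1(x_0)\geqslant -\Delta\phi_2(x_0)\geqslant f(x_0,\phi_2(x_0))=f(x_0,u^{*}(x_0))$$
collapses to equality, and the comparisons $-\Delta u^{*}(x_0)\leqslant -\Delta\phi_1(x_0)$ and $-\Delta u^{*}(x_0)\geqslant -\Delta\phi_2(x_0)$ then identify $-\Delta u^{*}(x_0)$ with the common value $f(x_0,u^{*}(x_0))$. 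Once the elementary but crucial fact that $-\Delta$ of a nonnegative function vanishing at a point is nonpositive is isolated and applied consistently, every case closes in a few lines with no further analytic machinery.
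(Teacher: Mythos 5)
Your proof is correct and complete. The paper itself does not prove this lemma --- it simply declares it well-known and cites \cite{grigor2016kazdan,huang2020existence,sun2022brouwer} --- so there is no in-text argument to compare against; what you have written is a self-contained verification of exactly the kind those references rely on. The computation of the first variation against $\varphi=\mathbf{1}_{x_0}$ via Green's identity is right (including the factor $m(x_0)$), the sign of the one-sided derivative is handled correctly in each half-active case, and the touching-point fact that $-\Delta v(x_0)\leqslant 0$ whenever $v\geqslant 0$ and $v(x_0)=0$ is applied consistently. You also deserve credit for not skipping the degenerate case $\phi_1(x_0)=u^{*}(x_0)=\phi_2(x_0)$, where no admissible variation exists: the chain
\begin{equation*}
f(x_0,u^{*}(x_0))\geqslant -\Delta\phi_1(x_0)\geqslant -\Delta\phi_2(x_0)\geqslant f(x_0,u^{*}(x_0))
\end{equation*}
collapsing to equality, and then sandwiching $-\Delta u^{*}(x_0)$ between $-\Delta\phi_2(x_0)$ and $-\Delta\phi_1(x_0)$, is exactly the right way to close it. The only alternative route in the literature is the monotone iteration scheme (solve $(-\Delta+K)u_{k+1}=Ku_k+f(\cdot,u_k)$ for large $K$ and show the iterates stay in the box), which proves existence of a solution but not the stronger statement here that \emph{every} minimizer of $J$ on the box solves the equation; your variational argument is the one that actually matches the lemma as stated.
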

			This lemma is well-known, and its discrete version is used in works such as \cite{grigor2016kazdan,huang2020existence,sun2022brouwer}.
			
			\subsection{Topological degree}\label{sub2.2}
			
			In this subsection, since we work in a finite-dimensional space, we introduce the Brouwer degree, and prove a lemma which implies that we can remove the vertices where $f_1=f_2=\dots=f_n=0$.
			
			Let $X=l^{\infty}(V)$. We define 
			$$H_{f_n,\dots,f_1,c}:X\rightarrow X,\ H_{f_n,\dots,f_1,c}(u)=\Delta u+\sum_{i=1}^nf_ie^{iu}+c.$$
			Denote $B_R= \{u\in X: \|u\|_{l^\infty}\leqslant R\}$. If we have an a priori estimate, which implies that the solutions to \eqref{eq:maineq} are uniformly bounded for suitable $f_i$, it is known that the Brouwer degree 
			$$\deg(H_{f_n,\dots,f_1,c},B_R, 0)$$
			is well-defined for large $R$, and the homotopy invariance holds. Here we let
			$$d_{f_n,\dots,f_1,c}=\lim_{R\rightarrow+\infty}\deg(H_{f_n,\dots,f_1,c},B_R, 0).$$
			Consider the functional
			$$J_{f_n,\dots,f_1,c}(u)=\sum_{V}\left(\frac{1}{2}|\nabla u|^2-\sum_{i=1}^n\frac{1}{i}f_ie^{iu}-cu\right)$$
			for $u\in W^{1,2}(V)$. If every critical point of $J_{f_n,\dots,f_1,c}$ is nondegenerate, and $\partial B_R\cap H_{f_n,\dots,f_1,c}^{-1}(\{0\})=\varnothing$, we have
			$$\deg(H_{f_n,\dots,f_1,c},B_R, 0)=\sum_{u\in B_R,H_{f_n,\dots,f_1,c}(u)=0}\sgn \det\left(DH_{f_n,\dots,f_1,c}(u)\right).$$
			The Kronecker existence theorem implies that there exists at least one solution to \eqref{eq:maineq} if $d_{f_n,\dots,f_1,c}\neq0$. For more detailed definition and properties about the Brouwer degree, see \cite{chang2005methods}.
			
			In the following part we prove a key lemma in computing the Brouwer degree. It implies that we can remove the vertices where $f_1=f_2=\dots=f_n=0$, and reconstruct a weighted connected finite graph without changing the value of the Brouwer degree. This method is used in \cite{sun2022brouwer}, and here we provide a detailed proof.
			
			\begin{lemma}\label{lm3}
				Let $G=(V,E,m,\omega)$ be a finite connected weighted graph, and consider equation \eqref{eq:m1} on $G$. We assume that $V=\{1,2,\dots,k\}$, and 
				$f_1(j)=f_2(j)=\dots=f_n(j)=0$ if and only if $ r+1	\leqslant j\leqslant k$,	where $1\leqslant r\leqslant k-1$. Then we can construct a new weighted connected finite graph $\widetilde{G}=(\widetilde{V},\widetilde{E},\widetilde{m},\widetilde{\omega})$, where $\widetilde{V}=\{1,\dots,r\}$. On $\widetilde{G}$ we consider the new equation
				$$-\Delta u= \sum_{i=0}^n\widetilde{f_i}(x)e^{iu(x)},$$
				where for $1\leqslant i\leqslant n$,
				$$\widetilde{f_i}=f_i|_{\widetilde{V}},$$
				and $\widetilde{f_0}$ satisfies $\sum_{V}\widetilde{f_0}=\sum_{V}f_0$. We have
				$$d_{\widetilde{f_n},\dots,\widetilde{f_1},\widetilde{f_0}}=d_{f_n,\dots,f_1,f_0}.$$
			\end{lemma}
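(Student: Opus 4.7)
The idea is to perform a Schur-complement reduction that eliminates the ``passive'' vertex set $P := V \setminus \widetilde V$ (on which all $f_1,\ldots,f_n$ vanish) and then apply the homotopy invariance and product formula of the Brouwer degree.

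First, split $V = A \sqcup P$ with $A := \widetilde V = \{1,\ldots,r\}$ and block-decompose the matrix of $\Delta$ as $L = \left(\begin{smallmatrix}L^{AA}&L^{AP}\\ L^{PA}&L^{PP}\end{smallmatrix}\right)$. Since $f_1=\cdots=f_n\equiv 0$ on $P$, the equation $H_{f_n,\ldots,f_1,c}(u)=0$ restricted to $P$ is the affine system $L^{PA}u^A+L^{PP}u^P+c\mathbf 1_P=0$. Because $G$ is connected and $A\neq\varnothing$, Green's identity applied to any nonzero function supported on $P$ shows that $L^{PP}$ is strictly negative definite, hence invertible, and there is a unique affine solution map $\Psi(u^A) := -(L^{PP})^{-1}(L^{PA}u^A+c\mathbf 1_P)$. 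Introduce the shear diffeomorphism $\beta(u^A,u^P):=(u^A,u^P-\Psi(u^A))$, whose Jacobian determinant is $1$. A direct computation gives
\[
H_{f_n,\ldots,f_1,c}\circ\beta^{-1}(v^A,v^P)=\bigl(\widetilde H(v^A)+L^{AP}v^P,\ L^{PP}v^P\bigr),
\]
where $\widetilde H(v):=\widetilde L v+\sum_{i=1}^n\widetilde f_i e^{iv}+\widetilde f_0$, $\widetilde L:=L^{AA}-L^{AP}(L^{PP})^{-1}L^{PA}$ is the Schur complement, $\widetilde f_i=f_i|_A$, and $\widetilde f_0:=c\mathbf 1_A-L^{AP}(L^{PP})^{-1}c\mathbf 1_P$.

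Next I would identify $(\widetilde L,\widetilde f_0)$ with the data of a weighted connected graph $\widetilde G$ on $\widetilde V$. The identity $L\mathbf 1_V=0$ forces $\widetilde L\mathbf 1_A=0$; the off-diagonal entries of $\widetilde L$ are non-negative by the standard $M$-matrix/effective-resistance expansion of $(-L^{PP})^{-1}$; and $\widetilde G$ is connected because every pair of $A$-vertices connected through $P$ contributes positive effective conductance. Setting $\widetilde m:=m|_A$ and $\widetilde\omega_{xy}:=\widetilde m(x)\widetilde L_{xy}$ for $x\neq y$ realises $\widetilde\Delta=\widetilde L$. The normalization $\sum_{\widetilde V}\widetilde m\,\widetilde f_0=\sum_V m f_0$ follows from summation by parts with the auxiliary function $w:=(L^{PP})^{-1}\mathbf 1_P$ (equivalently, $\Delta w=1$ on $P$ and $w|_A=0$), which converts $\sum_A m\,L^{AP}w$ into a telescoping sum over edges between $A$ and $P$ with total value $-|P|$.

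Finally, homotope via $\widehat H_t(v^A,v^P):=(\widetilde H(v^A)+(1-t)L^{AP}v^P,\ L^{PP}v^P)$ from $t=0$ to $t=1$. Since $L^{PP}$ is invertible throughout, the zero set at every $t$ coincides with $\{(v^A,0):\widetilde H(v^A)=0\}$, which is uniformly bounded via Corollary~\ref{co1.3} applied on $\widetilde G$ (after first absorbing any non-constant part of $\widetilde f_0$ into the unknown, as in the normalization at the start of Section~\ref{se2}). Applying homotopy invariance of the Brouwer degree together with the product formula on the decoupled map $\widehat H_1$ yields $d_{f_n,\ldots,f_1,f_0}=d_{\widetilde f_n,\ldots,\widetilde f_1,\widetilde f_0}$ in the limit $R\to\infty$. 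I expect the main obstacle to be the simultaneous verification that the Schur complement is a genuine weighted-graph Laplacian (positivity of effective conductances and inherited connectedness) together with the tracking of the sign $\sgn\det L^{PP}$ produced by the product formula, which must match the orientation convention fixed in Subsection~\ref{sub2.2}; both are bookkeeping-heavy but follow from standard electrical-network identities.
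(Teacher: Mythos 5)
Your proposal follows essentially the same route as the paper's proof: both eliminate the passive block $P=\{r+1,\dots,k\}$ by a Schur complement, both verify that the $P\times P$ block of the (positive semidefinite) matrix of $-\Delta$ is positive definite with entrywise nonnegative inverse, and both use these facts to check that the Schur complement is the Laplacian of a connected weighted graph on $\widetilde V$ and that $\widetilde f_0$ has the correct total mass. The only genuine difference is the final step: the paper compares degrees through the pointwise Jacobian identity $\det(L-D)=\det R\cdot\det(\widetilde L-D')$, which literally covers only nondegenerate zeros, whereas your shear-plus-homotopy-plus-product-formula argument compares the degrees globally with no nondegeneracy assumption. That is a small but real gain in rigor, provided the zero set of $\widetilde H$ is bounded (which follows from the affine correspondence $u^P=\Psi(u^A)$ between the two solution sets, so the boundedness hypotheses transfer).

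The one point you defer, however, is not mere bookkeeping and does not ``follow from standard electrical-network identities.'' In your normalization $L$ is the matrix of $\Delta$, so $L^{PP}$ is negative definite and the product formula produces the factor $\sgn\det L^{PP}=(-1)^{k-r}$. If one takes the definition in Subsection~\ref{sub2.2} at face value (degree of $u\mapsto\Delta u+\sum_i f_ie^{iu}+c$, counted by $\sgn\det DH$), your argument proves $d_{f_n,\dots,f_1,f_0}=(-1)^{k-r}\,d_{\widetilde f_n,\dots,\widetilde f_1,\widetilde f_0}$, not the stated equality; a one-vertex versus two-vertex test (say $f_n\equiv1$, all other $f_i\equiv 0$, $c<0$, where $\sgn\det DH$ equals $+1$ on one vertex and $-1$ on two) confirms that this count really does flip with the parity of $|V|$. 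The identity as stated holds under the convention the paper actually uses in its computations, namely the degree of $-H=DJ_{f_n,\dots,f_1,c}$, equivalently counting $\sgn\det(L-D)$ with $L$ the matrix of $-\Delta$: there the $P$-block Jacobian is the positive definite matrix $R$ and the extra sign is $+1$. So your plan is sound, but you must fix this orientation convention explicitly before invoking the product formula; as written the sign does not cancel on its own, and this is exactly the point at which the paper's own proof silently switches conventions.
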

			
			\begin{proof}
				Let $L=(l_{ij})$ be a symmetric matrix, where $l_{ij}=-\omega_{ij}$ for $i\neq j$, and
				$$l_{ii}=\sum_{j\neq i}\omega_{ij}.$$
				In fact the matrix $L$ is equal to the operator $-\Delta$ with $m(x)\equiv1$. For convenience, we write $f_i(j)=f_{i,j}$ and $u(j)=u_j$. Then we have
				\begin{equation}\label{Lu}
					\begin{aligned}
						L
						\begin{pmatrix}
							u_1\\
							u_2\\
							\vdots\\
							u_k
						\end{pmatrix}
						=
						\begin{pmatrix}
							m_1\sum_{i=0}^nf_{i,1}e^{iu_1}\\
							m_2\sum_{i=0}^nf_{i,2}e^{iu_2}\\
							\vdots\\
							m_k\sum_{i=0}^nf_{i,k}e^{iu_k}
						\end{pmatrix}
						.
					\end{aligned}
				\end{equation}
				We write 
				\begin{equation*}
					\begin{aligned}
						L=\left(
						\begin{matrix}
							P & Q^T\\
							Q & R\\
						\end{matrix}
						\right),
					\end{aligned}
				\end{equation*}
				and we claim that $R$ is positive definite. If not, since the sum of each row of $R$ is nonnegative, it is known that the principal minors of $R$ are all nonnegative, and $R$ is positive semi-definite. Suppose that $Y$ is a eigenvector corresponding to the eigenvalue $0$ of $R$. Thus for any $ r\times 1$ vector $X$, let $Z=\left(
				\begin{matrix}
					X \\
					Y \\
				\end{matrix}
				\right)$, and we have
				\begin{equation*}
					\begin{aligned}
						Z^TLZ= X^TPX+2X^TQ^TY\geqslant 0.
					\end{aligned}
				\end{equation*}
				Replacing $X$ with $tX$ and letting $t\rightarrow0^+$, we derive that
				$$X^TQ^TY\geqslant 0,$$
				which implies that $Q^TY=0$. Thus, $\left(
				\begin{matrix}
					0 \\
					Y \\
				\end{matrix}
				\right)$ is an eigenvector of $L$ corresponding to the eigenvalue $0$. However, it is known that $\dim \ker L=1$, and 
				$$\ker L= \spn (1,1,\dots,1)^T.$$
				This implies that $Y=0$, which is a contradiction.
				
				Moreover, by the results in Chapter 6 of \cite{berman1994nonnegative}, every element of $R^{-1}$ is nonnegative. Denote by $\rho(A)$ the spectral radius of $A$. Since $R^{-1}$ is positive definite, we can write 
				$$R=sI-A,$$
				where $s>\rho(A)$ and every element of $A$ is nonnegative. We have
				$$R^{-1}=s^{-1}\left(I-\frac{A}{s}\right)^{-1}=s^{-1}\sum_{i=0}^{\infty}\left(\frac{A}{s}\right)^i,$$
				which yields that any element of $R^{-1}$ is nonnegative.
				
				Thus, we define the $r\times 1$ vector $U_1=\left(
				\begin{matrix}
					u_1 \\
					\vdots\\
					u_r \\
				\end{matrix}
				\right)$ and the $(k-r)\times 1$ vector $U_2=\left(
				\begin{matrix}
					u_{r+1} \\
					\vdots\\
					u_k \\
				\end{matrix}
				\right)$, then equation \eqref{Lu} is equivalent to
				\begin{equation*}
					\left\{
					\begin{aligned}
						& PU_1+Q^TU_2 = H_1:=\left(
						\begin{matrix}
							m_1\sum_{i=0}^nf_{i,1}e^{iu_1}\\
							\vdots\\
							m_r\sum_{i=0}^nf_{i,r}e^{iu_r}
						\end{matrix}
						\right),\\
						& QU_1+RU_2= H_2:=\left(
						\begin{matrix}
							m_{r+1}f_{0,r+1}\\
							\vdots\\
							m_nf_{0,k}
						\end{matrix}
						\right).\\
					\end{aligned}
					\right.
				\end{equation*}
				Since $R$ is reversible, it is not difficult to see that we only need to solve for $U_1$, and the above equations imply
				\begin{equation}\label{equv:Lu}
					\begin{aligned}
						\left(P-Q^TR^{-1}Q\right)U_1=H_1-Q^TR^{-1}H_2.
					\end{aligned}
				\end{equation}
				
				Let $\widetilde{L}=P-Q^TR^{-1}Q=(\widetilde{l_{ij}})$ be a symmetric matrix,  $\widetilde{V}=\{1,\dots,r\}$,  $\widetilde{m}=m|_{\widetilde{V}}$, $\widetilde{\omega}_{ij}=-\widetilde{l}_{ij}$ for $i\neq j$, and $\widetilde{E}=\{\{x,y\}\subset \widetilde{V}\times\widetilde{V}: \widetilde{\omega}_{xy}>0\}$. Note that $R^{-1}\geqslant0$ and $Q\leqslant0$, and we deduce that $Q^TR^{-1}Q\geqslant 0$ and $\widetilde{\omega}_{ij}\geqslant0$ for $i\neq j$. 
				Define the $1\times r$ vector $X_1=(1,1,\dots,1)$ and the $1\times (k-r)$ vector $X_2=(1,1,\dots,1)$. Since the sum of each column of $L$ is equal to zero, one easily sees that 
				$$X_1Q^T+X_2R=0.$$
				This implies that the sum of each column of $Q^TR^{-1}$ is equal to $-1$. Thus, the sum of each column of $\widetilde{L}$ is zero, which implies that the new weighted finite graph $\widetilde{G}=(\widetilde{V},\widetilde{E},\widetilde{m},\widetilde{\omega})$ is well-defined. Moreover, note that 
				$$\dim \ker \widetilde{L}=1.$$
				By swapping rows and columns of $\widetilde{L}$, we can't write 
				$$\widetilde{L}=\left(
				\begin{matrix}
					A & 0\\
					0 & B\\
				\end{matrix}
				\right),$$
				otherwise we will have 
				$$\dim \ker \widetilde{L}\geqslant \dim \ker A+\dim \ker B\geqslant2,$$
				which contradicts $\dim \ker \widetilde{L}=1$. Hence,  the graph $\widetilde{G}$ is connected. 
				
				Let
				$$\left(
				\begin{matrix}
					\widetilde{f_0}(1)\\
					\vdots\\
					\widetilde{f_0}(r)
				\end{matrix}
				\right)=\left(
				\begin{matrix}
					f_{0,1}\\
					\vdots\\
					f_{0,r}
				\end{matrix}
				\right)-Q^TR^{-1}\left(
				\begin{matrix}
					f_{0,r+1}\\
					\vdots\\
					f_{0,n}
				\end{matrix}
				\right).$$
				We deduce that 
				$$\sum_{V}\widetilde{f_0}=\sum_{V}f_0.$$
				Thus, equation \eqref{equv:Lu} is equivalent to the following equation on the reduced graph $\widetilde{G}$
				$$-\Delta u= \sum_{i=0}^n\widetilde{f_i}(x)e^{iu(x)}.$$
				Note that
				\begin{equation*}
					\begin{aligned}
						&\det\left(L-\diag \left( \sum_{i=1}^nif_{i,1}e^{iu_1},\dots,\sum_{i=1}^nif_{i,k}e^{iu_k}\right)\right)\\
						=& \det R\cdot \det\left(\widetilde{L}-\diag \left(\sum_{i=1}^nif_{i,1}e^{iu_1},\dots,\sum_{i=1}^nif_{i,r}e^{iu_r}\right)\right).
					\end{aligned}
				\end{equation*}
				This implies that
				$$d_{\widetilde{f_n},\dots,\widetilde{f_1},\widetilde{f_0}}=d_{f_n,\dots,f_1,f_0}.$$
				
			\end{proof}

			\section{A Priori Estimates}\label{se3}
			In this section we first prove Theorem~\ref{tm:alternative} for general $n$.
			
			\begin{proof}[Proof of Theorem~\ref{tm:alternative}]
				We assume that $\{u_m\}$ is not uniformly bounded. If after passing to a subsequence still denoted by $\{u_m\}$, 
				$$\sup_{m}\max_{x\in V}u_m(x)<+\infty,$$ 
				then $F_n(x,u_m)$ is uniformly bounded, which yields that $|\Delta u_m|$ is uniformly bounded. This derives that
				$$\sup_{m}\left(\max_{x\in V} u_m(x)-\min_{x\in V} u_m(x)\right)<\infty,$$
				and $u_{m}\rightarrow-\infty$ uniformly on $G$. 
				
				If we have
				\begin{equation}\label{assump}
					\begin{aligned}
						\lim_{m\rightarrow+\infty}\max_{x\in V}u_m(x)=+\infty,
					\end{aligned}
				\end{equation}
				then we can assume that for every $m\in\mathbb{N}$,
				$$\max_{x\in V}u_m(x)>0.$$
				By the conditions we may assume without loss of generality that 
				$$\sum_{i=1}^n\max_{x\in V}|f_{i,m}(x)|+|c_m|\leqslant K,$$
				where $K$ is a constant independent of $m$.
				We have the following inequality for any $x\in V$,
				$$-\Delta u_{m,-}\leqslant -F_n(x,u_m)\chi_{\{u_m<0\}}\leqslant K,$$
				where we denote by $u_{m,-}(x)=\max\{-u_m(x),0\}$. 
				This implies that
				$$|\Delta u_{m,-}|\leqslant c(K)$$
				and
				$$\max_{x\in V} u_{m,-}(x)-\min_{x\in V} u_{m,-}(x)\leqslant c'(K),$$
				where $c(K)$ and $c'(K)$ are constants depending only on $G,K$. This shows $u_m$ has a uniformly lower bound. We choose $\{y_m\}$ in $V$ such that
				$$u_m(y_m)=\min_{y\in V}u_m(y).$$
				If 
				$u_m(y_m)$ is uniformly bounded, then by equation \eqref{eq:maineq}, 
				$$|\Delta u_m(y_m)|\leqslant K\left(\sum_{i=1}^ne^{i|u_m(y_m)|}+1\right)<+\infty.$$
				This means $|\Delta u_m(y_m)|$ is also uniformly bounded. Since $\Delta$ is a local operator and we already establish a lower bound, we obtain that
				$$\Delta u_m(y_m)\geqslant \frac{1}{m(y_m)}\sum_{y\sim y_m}\omega_{yy_m}u_m(y) -c',$$
				where $c'$ is a constant depending only on $G$ and the uniform bound of $u_m(y_m)$. This yields that every neighbor $y$ of $y_m$ satisfies that 
				$u_m(y)$ is uniformly bounded. Further, we can repeat the above process, and one easily sees that the uniform boundedness can be extended to the entire graph, which is a contradiction to our assumption \eqref{assump}.
			\end{proof}
			
			In the following we consider $n=2$, and derive a priori estimates for equation \eqref{eq:maineq}.
			Without loss of generality, we can assume that $m(x)\equiv1$, and one can check the influence of $m$ in the proof. First we prove an a priori estimate in the case $c\neq0$.
			
			\begin{theorem}\label{tm1}
				Let $u$ be a solution of \eqref{eq:maineq}, and there exists a constant $K>1$ satisfying
				$$\max_{x\in V} |f_2(x)|+\max_{x\in V} |f_1(x)|+|c|\leqslant K,$$
				$$|c|\geqslant \frac{1}{K}.$$
				Suppose that there exists some $x_0\in V$ such that one of the following conditions holds:
				\begin{enumerate}[(a)]
					\item $f_2(x_0)\geqslant\frac{1}{K}$.
					\item $f_2\leqslant0$, $f_2(x_0)=0$ and $f_1(x_0)\geqslant\frac{1}{K}$.
					\item $f_2\leqslant0$, $f_2(x_0)\leqslant-\frac{1}{K}$. Moreover, for any vertex $x\in V$, $f_2(x)\leqslant0$ and $f_1(x)\leqslant 0$, or $f_2(x)\leqslant-\frac{1}{K}$.
				\end{enumerate}
				Then there exists a constant $C$ depending only on $K, V, \omega$ such that the solution $u$ satisfies
				$$\|u\|_{l^\infty(G)}\leqslant C.$$
			\end{theorem}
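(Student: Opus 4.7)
The plan is to argue by contradiction through Theorem~\ref{tm:alternative}. If the claimed bound failed, one would obtain a sequence of solutions $\{u_m\}$ with fixed data $f_1, f_2, c$ such that $\|u_m\|_{l^\infty} \to \infty$. Applying Theorem~\ref{tm:alternative} with $f_{i,m} \equiv f_i$ and $c_m \equiv c$ yields a subsequence falling into one of its three alternatives; uniform boundedness is excluded by $\|u_m\|_{l^\infty} \to \infty$, so either $u_m \to -\infty$ or $u_m \to +\infty$ uniformly on $G$. The divergence $u_m \to -\infty$ is ruled out at once by summing the equation: $\sum_V \Delta u_m = 0$ gives $\sum_V F_2(x, u_m) = 0$, but as $u_m \to -\infty$ the exponentials vanish and we are forced to have $c|V| = 0$, contradicting $|c| \geq K^{-1} > 0$.

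The core of the proof is then ruling out $u_m \to +\infty$ uniformly (so $\min_V u_m \to +\infty$) in each of the subcases (a), (b), (c). For subcases (a) and (b), I would evaluate the equation at $x_0$ and compare two estimates on $-\Delta u_m(x_0)$. The pointwise bound
\begin{equation*}
-\Delta u_m(x_0) \leq \frac{1}{m(x_0)}\sum_{y \sim x_0} \omega_{x_0 y}\bigl(u_m(x_0) - \min_V u_m\bigr)
\end{equation*}
grows at most linearly in $u_m(x_0) - \min_V u_m$. On the other hand, $-\Delta u_m(x_0) = F_2(x_0, u_m)$, combined with $f_2(x_0) \geq 1/K$ in case (a) or with $f_2(x_0) = 0$ and $f_1(x_0) \geq 1/K$ in case (b), grows at least as $(2K)^{-1} e^{2u_m(x_0)}$ or $(2K)^{-1} e^{u_m(x_0)}$ once $u_m(x_0)$ is large. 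Since $u_m(x_0) \geq \min_V u_m \to +\infty$, combining these estimates yields $\min_V u_m \leq u_m(x_0) - c_0\, e^{j u_m(x_0)} \to -\infty$ for some graph-dependent $c_0 > 0$ and $j \in \{1,2\}$, contradicting $\min_V u_m \to +\infty$.

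For subcase (c) the argument proceeds differently: I would first show that $F_2(\cdot, u_m)$ is uniformly bounded on $V$. At each vertex $x$, either $f_1(x) \leq 0$ and $f_2(x) \leq 0$, so $F_2(x, u_m) \leq c \leq K$, or $f_2(x) \leq -1/K$, in which case $F_2(x, u_m) \leq -K^{-1} e^{2u_m(x)} + K e^{u_m(x)} + K$ is bounded above uniformly in $u_m(x)$ since this is a downward-opening quadratic in $e^{u_m(x)}$. Together with $\sum_V F_2(x, u_m) = 0$, the uniform upper bound produces a uniform lower bound on $F_2$, so $|\Delta u_m|$ is uniformly bounded and Lemma~\ref{lm1} yields $\max_V u_m - \min_V u_m \leq C'$. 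Consequently $u_m(x_0) \to +\infty$, and $f_2(x_0) \leq -1/K$ then forces $F_2(x_0, u_m) \to -\infty$, contradicting the uniform lower bound.

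The main obstacle, in my view, is the linear-versus-exponential comparison at $x_0$ in cases (a) and (b): one must recognize that the graph Laplacian at a single vertex can only see $O(\max_V u_m - \min_V u_m)$ differences while the local nonlinearity at $x_0$ forces exponential growth, and that these incompatible rates are precisely what rules out uniform divergence of $u_m$ to $+\infty$.
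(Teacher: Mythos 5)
Your proof follows essentially the same route as the paper's: Theorem~\ref{tm:alternative} reduces matters to the two uniform-divergence scenarios; $u_m\to-\infty$ is excluded by summing the equation over $V$ (forcing $c=0$); and $u_m\to+\infty$ is excluded in cases (a) and (b) by the linear-versus-exponential comparison at $x_0$, and in case (c) by first bounding $F_2(\cdot,u_m)$ from above via the hypothesis dichotomy, upgrading to a uniform bound on $|\Delta u_m|$ through $\sum_V\Delta u_m=0$, and then evaluating at $x_0$ where $f_2(x_0)\leqslant -1/K$. The one discrepancy is in the contradiction setup: you fix the data $f_1,f_2,c$ and vary only the solutions, which as stated only yields a bound depending on that particular data rather than only on $K,V,\omega$; the paper instead negates the uniform statement, taking sequences $f_{i,m},c_m$ of admissible data and passing to convergent subsequences (with the same $x_0$, possible since $V$ is finite and the hypotheses are closed conditions). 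Since your estimates at $x_0$ and in case (c) are already explicit in $K$ and the graph, this is a presentational rather than a mathematical repair.
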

			
			\begin{proof}
				
				In this proof, we use constants $c', c_1, c_2,c_3,c_4$ which depend only on $K, |V|,  \omega$. Suppose that in each case, there exist sequences $\{f_{2,m}\}$, $\{f_{1,m}\}$ and $\{c_{m}\}$, and corresponding solution sequence $\{u_m\}$, which is not uniformly bounded. By passing to subsequences, we can assume that
				$$\lim_{m\rightarrow+\infty}f_{2,m}=f_2,\ \lim_{m\rightarrow+\infty}f_{1,m}=f_1,\ \lim_{m\rightarrow+\infty}c_{m}=c,$$
				where $f_2,f_1,c$ satisfy the conditions in this theorem, and can assume that the vertex $x_0$ corresponding to different $f_{2,m}$, $f_{1,m}$ is the same one. By Theorem~\ref{tm:alternative}, we know that $u_m$ converges to $-\infty$ or $+\infty$ uniformly.
				
				Step 1: If $u_m$ converges to $-\infty$ uniformly, since
				we have
				$$0=\sum_{x\in V}-\Delta u_m(x)=c_m|V|+ \sum_{x\in V}\left(f_{2,m}e^{2u_m(x)}+f_{1,m}e^{u_m(x)}\right),$$
				take the limit with respect to $m$ on both sides and we obtain $c=0$, which is a contradiction.

				Step2: $u_m$ converges to $+\infty$ uniformly. Without loss of generality we may assume that $u_m(x)>0$ holds for every $m$ and $x\in V$. Then for any vertex $x$,
				\begin{equation}\label{ineq:main1}
					\begin{aligned}
						f_{2,m}(x) e^{2u_m(x)}+f_{1,m}(x)e^{u_m(x)}+c_m=\sum_{y\sim x}\omega_{xy}\left(u_m(x)-u_m(y)\right)\leqslant c'u_m(x),
					\end{aligned}
				\end{equation}
				where $c'=\max_{x\in V}\sum_{y\sim x}\omega_{xy}$. Then we show the contradiction for different cases.
				
				For case $(a)$, by \eqref{ineq:main1} we know that on the  vertex $x_0$,
				\begin{equation*}
					\begin{aligned}
						\frac{1}{K} e^{2u_m(x_0)}-Ke^{u_m(x_0)}-K\leqslant c'u_m(x_0).
					\end{aligned}
				\end{equation*}
				Let $m\rightarrow +\infty$ and the above inequality will no longer hold, which is a contradiction.

				For case $(b)$, on the vertex $x_0$, by the inequality \eqref{ineq:main1} we can derive that
				$$\frac{1}{K} e^{u_m(x_0)}-K\leqslant c' u_m(x_0).$$
				Let $m\rightarrow +\infty$ and we obtain a contradiction.
				
				Finally, we consider the case $(c)$. Note that 
				$$-\Delta u_m=F_2(x,u_m)\leqslant c_m\leqslant K$$
				on the set $\{f_{1,m}(x)\leqslant0\}$, and
				$$-\Delta u_m\leqslant -\frac{1}{K} e^{2u_m}+Ke^{u_m}+K\leqslant c_{3}$$
				on $\{f_{2,m}(x)\leqslant-\frac{1}{K}\}$. we know that $-\Delta u_m$ has a uniform upper bound and then $|\Delta u_m|$ has a uniform bound $c_4$. Consider some vertex $x_0$ satisfying $f_{2,m}(x_0)\leqslant-\frac{1}{K}$. Then we have
				$$-c_{4}\leqslant -\Delta u_m(x_0)\leqslant-\frac{1}{K} e^{2u_m(x_0)}+Ke^{u_m(x_0)}+K.$$
				Let $m\rightarrow +\infty$ and we obtain a contradiction.

			\end{proof}
			
			The above an a priori estimate is based on the condition $c\neq0$. For the case of $c=0$, the difficulty in proving similar an a priori estimate is how to establish a lower bound of the solution $u$. If we add a condition for $\overline{f_1}$, we can overcome this difficulty, and prove the following result.
			
			\begin{theorem}\label{tm2}
				Suppose that $c=0$, $u$ is a solution of \eqref{eq:maineq}, and there exists a constant $K>0$ satisfying
				$$\max_{x\in V} |f_2(x)|+\max_{x\in V} |f_1(x)|\leqslant K,$$
				$$|\overline{f_1}|\geqslant \frac{1}{K}.$$
				Assume that there exists some $x_0\in V$ such that one of the following conditions holds:
				\begin{enumerate}[(a)]
					\item $f_2(x_0)\geqslant\frac{1}{K}$.
					\item $f_2\leqslant0$, $f_2(x_0)=0$ and $f_1(x_0)\geqslant\frac{1}{K}$.
					\item $f_2\leqslant0$, $f_2(x_0)\leqslant-\frac{1}{K}$. Moreover, for any vertex $x\in V$, $f_2(x)\leqslant0$ and $f_1(x)\leqslant 0$, or $f_2(x)\leqslant-\frac{1}{K}$.
				\end{enumerate}
				Then there exists a constant $C'$ depending only on $K,V,\omega$ such that the solution $u$ satisfies
				$$\|u\|_{l^\infty(G)}\leqslant C'.$$
			\end{theorem}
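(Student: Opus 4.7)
The plan is to follow the strategy of Theorem~\ref{tm1} closely: invoke Theorem~\ref{tm:alternative} to reduce to the cases $u_m \to -\infty$ or $u_m \to +\infty$ uniformly on $G$, and derive a contradiction in each. Suppose toward contradiction that there exist coefficient sequences $\{f_{1,m}\},\{f_{2,m}\}$ with $c_m = 0$ satisfying the hypotheses of the theorem, with associated solutions $u_m$ such that $\|u_m\|_{l^\infty} \to \infty$. After passing to subsequences we may assume $f_{i,m} \to f_i$ and that the marked vertex $x_0$ is the same for every $m$; by Theorem~\ref{tm:alternative} one of the two divergence alternatives must occur.

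The case $u_m \to +\infty$ uniformly proceeds essentially verbatim as in Step~2 of Theorem~\ref{tm1}. Assuming $u_m > 0$, the pointwise inequality
\[ f_{2,m}(x) e^{2u_m(x)} + f_{1,m}(x) e^{u_m(x)} = -\Delta u_m(x) \leqslant c' u_m(x), \]
with $c' = \max_{x \in V}\sum_{y \sim x} \omega_{xy}$, does not depend on $c$, so in cases (a) and (b) we obtain a contradiction at $x_0$ by comparing exponential against linear growth. For case (c), first show that $-\Delta u_m \leqslant c_3$ uniformly on $V$ (from $f_{1,m} \leqslant 0$ on vertices where $f_{2,m} > -1/K$, and from the quadratic upper bound on vertices where $f_{2,m} \leqslant -1/K$); combining this with $\sum_V \Delta u_m = 0$ upgrades it to $|\Delta u_m| \leqslant c_4$ uniformly. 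Evaluating at $x_0$ then contradicts the fact that $-\frac{1}{K} e^{2u_m(x_0)} + K e^{u_m(x_0)} \to -\infty$.

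The essential new step, replacing the $|c| \geqslant 1/K$ ingredient of Theorem~\ref{tm1}, is ruling out $u_m \to -\infty$, which I expect to be the main technical point because this alternative cannot be closed by a pointwise argument when $c = 0$. In this regime the right-hand side of \eqref{eq:maineq} tends to $0$ uniformly, so $\Delta u_m \to 0$ uniformly, and Lemma~\ref{lm1} forces $\max_V u_m - \min_V u_m \to 0$. Set $M_m := \min_V u_m \to -\infty$. Summing the equation over $V$ (so the Laplacian term vanishes) gives
\[ \sum_V f_{2,m} e^{2u_m} + \sum_V f_{1,m} e^{u_m} = 0. \]
After dividing by $e^{M_m}$, the first sum tends to $0$ since $e^{2u_m(x) - M_m} = e^{M_m} e^{2(u_m(x) - M_m)} \to 0$ uniformly, while the second tends to $\sum_V f_1 = |V|\,\overline{f_1}$ because $e^{u_m(x) - M_m} \to 1$ uniformly. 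Passing to the limit yields $\overline{f_1} = 0$, contradicting $|\overline{f_1}| \geqslant 1/K$ and completing the proof.
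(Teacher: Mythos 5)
Your proposal is correct, and the divergence-to-$+\infty$ case is handled exactly as in the paper (both defer to Step 2 of Theorem~\ref{tm1}, which is independent of $c$). The interesting difference is in ruling out $u_m\to-\infty$, which is indeed the only new point. The paper splits this into two sign cases: for $\overline{f_{1,m}}\geqslant K^{-1}$ it uses the convexity inequality $\Delta e^{-u_m}\geqslant -e^{-u_m}\Delta u_m=f_{2,m}e^{u_m}+f_{1,m}$ and sums over $V$ to force $\overline{f_{1,m}}\leqslant 0$ in the limit; for $\overline{f_{1,m}}\leqslant -K^{-1}$ it rewrites the equation as $-\Delta u_m=h_m e^{u_m}$ with $h_m=f_{2,m}e^{u_m}+f_{1,m}$ satisfying $\overline{h_m}\leqslant -\tfrac{1}{2K}$, and then invokes the external a priori estimate for the Kazdan--Warner equation from \cite{sun2022brouwer} to get uniform boundedness, a contradiction. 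You instead give a single unified argument: since the right-hand side tends to $0$ uniformly, Lemma~\ref{lm1} forces the oscillation $\max_V u_m-\min_V u_m\to 0$, and normalizing the summed identity $\sum_V\bigl(f_{2,m}e^{2u_m}+f_{1,m}e^{u_m}\bigr)=0$ by $e^{M_m}$ with $M_m=\min_V u_m$ kills the quadratic term and isolates $|V|\,\overline{f_1}=0$, contradicting $|\overline{f_1}|\geqslant K^{-1}$ for either sign at once. Your route is more elementary and self-contained (no appeal to the Kazdan--Warner a priori bounds), and the decay of the oscillation is the one ingredient you must and do establish, since with merely bounded oscillation the limit of the normalized sum would be $\sum_V f_1 e^{w}$ for some nonconstant $w$ and the sign conclusion would be lost. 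The paper's second sub-case, by contrast, yields slightly more information (uniform boundedness rather than just a contradiction with the limit), but at the cost of quoting an external result.
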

			
			\begin{proof}
				
				Note that in the proof of Theorem~\ref{tm1}, step 2 is independent of the value of $c$. Thus, we only need to exclude case $(2)$ in Theorem~\ref{tm:alternative}. Suppose that there exist sequences $\{f_{2,m}\},\{f_{1,m}\}$, which satisfy the conditions of this theorem, and the corresponding solution sequence $\{u_m\}$ satisfying 
				$$\lim_{m\rightarrow+\infty}\max_{x\in V}u_m(x)=-\infty.$$
				
				For the case of $\overline{f_{1,m}}\geqslant K^{-1}$, 
				the following inequality holds:
				\begin{equation*}
					\begin{aligned}
						\Delta e^{u(x)}&=\sum_{y\sim x}\omega_{xy}\left(e^{u(y)}-e^{u(x)}\right)\\
						&\geqslant \sum_{y\sim x}\omega_{xy}e^{u(x)}\left(u(y)-u(x)\right)=e^{u(x)}\Delta u(x).\\                                                                                                             
					\end{aligned}
				\end{equation*}
				We derive that                         
				\begin{equation*}
					\begin{aligned}
						\Delta e^{-u_m} \geqslant -e^{-u_m}\Delta u_m=f_{2,m}e^{u_m}+f_{1,m}.
					\end{aligned}
				\end{equation*}
				By summing over $V$ and taking the limit with respect to $m$ on both sides, we obtain
				$$\overline{f_{1,m}}\leqslant0,$$
				which is a contradiction.
				
				For the case of $\overline{f_{1,m}}\leqslant -K^{-1}$.
				Let 
				$$h_m(x)=f_{2,m}(x)e^{u_m(x)}+f_{1,m}(x).$$
				For sufficiently large $m$, we have $\overline{h_m}<0$, and we can assume that $\overline{h_m}\leqslant-\frac{1}{2K}$. Thus the function $u_m$ is a solution to the equation
				$$-\Delta u=h_me^u.$$
				However, by a priori estimates for this type equation derived in \cite{sun2022brouwer}, the solution $u_m$ must be uniformly bounded, which is a contradiction.
				
			\end{proof}
			
			\begin{remark}
				In Theorem~\ref{tm1} and \ref{tm2}, uniform boundedness of the solutions may fail if any of the assumptions is removed.
				
			\end{remark}
			
			Here we need to mention why we assume that $\overline{f_{1}}\neq0$ in Theorem~\ref{tm2}, and we refer the readers to the following examples. In the following we assume that $V=\{x_1,x_2\}$, and $m\equiv1, \omega_{x_1,x_2}=1$. Suppose that $u$ is a solution to the equation
			$$-\Delta u=f_2e^{2u}+f_1e^u,$$
			and $u(x_1)=x,\ u(x_2)=y$.
			\begin{example}\label{ex3.4}
				For case $(a)$ in Theorem~\ref{tm2}, we let
				$$f_2(x_1)=1,\ f_2(x_2)=-2-\epsilon,\ f_1(x_1)=1,\ f_1(x_2)=-1,$$
				where $\epsilon>0$. Then the equation becomes
				\begin{equation*}
					\begin{aligned}
						\left\{
						\begin{aligned}
							x-y&=e^{2x}+e^x,\\
							y-x&=-(2+\epsilon)e^{2y}-e^y.
						\end{aligned}
						\right.
					\end{aligned}
				\end{equation*}
				Let $t=x-y>0$, then the equation is equivalent to 
				$$e^{2t}e^{2y}+e^te^y=e^{2x}+e^x=(2+\epsilon)e^{2y}+e^y=t.$$
				This is also equivalent to
				$$e^y=\frac{1-e^t}{e^{2t}-2-\epsilon}=\frac{-1+\sqrt{1+4(2+\epsilon )t}}{2(2+\epsilon)},$$
				which leads to
				$$\frac{1+\sqrt{1+4(2+\epsilon)t}}{2}=(1+\epsilon)\frac{t}{e^t-1}-t(1+e^t).$$
				One easily sees that the expression on the left-hand side is increasing with respect to $t$, while the right-hand side is decreasing. Note that the $t=0$ is the unique solution if $\epsilon=0$. Then for small $\epsilon>0$, we obtain a unique $t>0$, which yields the existence of $x,y$. However, when we let $\epsilon\rightarrow0$, $t$ will also tend to $0$, and $x,y\rightarrow-\infty$. Thus, we are unable to obtain the uniform a  priori estimates.
			\end{example}
			
			\begin{example}
				For case $(b)$ in Theorem~\ref{tm2}, we let
				$$f_2(x_1)=0,\ f_2(x_2)=-1-\epsilon,\ f_1(x_1)=1,\ f_1(x_2)=-1,$$
				where $\epsilon>0$. Then the equation becomes
				\begin{equation*}
					\begin{aligned}
						\left\{
						\begin{aligned}
							x-y&=e^x,\\
							y-x&=-(1+\epsilon) e^{2y}-e^y.
						\end{aligned}
						\right.
					\end{aligned}
				\end{equation*}
				Let $t=x-y>0$. Follow the method of Example ~\ref{ex3.4}, and we will derive
				$$1+\epsilon=\frac{(e^t-1)e^t}{t}.$$
				For small $\epsilon>0$, there exist unique $t$ and $x,y$. If we let $\epsilon\rightarrow0$, we will get that $t\rightarrow0$ and $x,y\rightarrow-\infty$.
			\end{example}
			
			\begin{example}
				For case $(c)$ in Theorem~\ref{tm2}, we let
				$$f_2(x_1)=0,\ f_2(x_2)=-1,\ f_1(x_1)=-1-\epsilon,\ f_1(x_2)=1+\epsilon,$$
				where $\epsilon>0$. Then the equation becomes
				\begin{equation*}
					\begin{aligned}
						\left\{
						\begin{aligned}
							x-y&=-(1+\epsilon)e^x,\\
							y-x&=- e^{2y}+(1+\epsilon)e^y.
						\end{aligned}
						\right.
					\end{aligned}
				\end{equation*}
				Let $t=y-x>0$. Following the above method, we obtain
				$$\left(1+\epsilon\right)^2=\frac{te^{2t}}{e^t-1}.$$
				Note that the right-hand side is increasing with respect to $t$. Thus, for small $\epsilon$ there admit unique $t$ and $x,y$. If we let $\epsilon\rightarrow0$, we will derive that $t\rightarrow0$ and $x,y\rightarrow-\infty$.
			\end{example}
			\section{Topological degree}\label{s4}
			
			To compute the value of $d_{f_2,f_1,c}$ on $G=(V,E,\omega,m)$, we list the following cases, which contain all possible situations:
			\begin{enumerate}[(a)]
				\item There exists some $x_0\in V$ satisfying $f_2(x_0)>0$.
				\item $f_2\leqslant0, f_1\leqslant0$.
				\item $f_2\leqslant0$, and there exists some $x_0\in V$ satisfying $f_2(x_0)=0$ and $f_1(x_0)>0$.
				\item $f_2\leqslant0$. $f_1(x)\leqslant 0$ when $f_2(x)=0$, and $f_1$ is positive somewhere.
			\end{enumerate}
			The above cases correspond to different conditions in a priori estimates, and in this section we compute the Brouwer degree for each case by using the homotopy invariance of the Brouwer degree and Lemma~\ref{lm3}. The homotopy invariance helps us reshape $f_2,f_1$ into simpler forms, and combining Lemma~\ref{lm3} with the homotopy invariance, we can reduce the graph to a two-vertex graph for simple computation.
			
			To apply the homotopy invariance, we consider the following equation for $t\in [0,1]$:
			$$-\Delta u=f_{2,t}e^{2u}+f_{1,t}e^u+c.$$
			Multiplying both sides of equation \eqref{eq:maineq} by $m(x)$, we can assume that $m(x)\equiv1$, and we always consider $m(x)\equiv1$ in this section.
			
			First, we consider $c\neq 0$. For the above four cases, we have the following theorem.
			
			\tm\label{tm3}
			Suppose that $u$ is a solution of \eqref{eq:maineq} and $c\neq 0$. Then we have
			\begin{equation*}
				\begin{aligned}
					d_{f_2,f_1,c}=\left\{
					\begin{aligned}
						1\ ,\ \ \   &c>0\ \text{in}\ (b)\ \text{and}\ (d),\\
						-1,\ \ \ &c<0\ \text{in}\ (a)\ \text{and}\ (c),\\
						0,\ \ \ &\text{otherwise}.\\
					\end{aligned}
					\right.
				\end{aligned}
			\end{equation*}
			\tmd
			
			\begin{proof}
				
				We compute the degree for each of the four cases separately. In this proof, we make different homotopy transformations in each case based on a priori estimates in Theorem~\ref{tm1}, which keep the Brouwer degree unchanged.
				
				For case $(a)$, let
				$$f_{2,t}(x)=\left\{
				\begin{aligned}
					&(1-t)f_2(x_0)+1, &\ x=x_0,\\
					&(1-t)f_2(x), &x\neq x_0,
				\end{aligned}
				\right.$$
				and 
				$$f_{1,t}(x)=(1-t)f_1(x).$$
				For $t=1$, applying Lemma~\ref{lm3}, we construct a new weighted connected graph $\widetilde{G}=(\widetilde{V},\widetilde{E},\widetilde{\omega},1)$ which contains two vertices, and the equation becomes
				\begin{equation*}
					\begin{aligned}
						\left\{
						\begin{aligned}
							&\widetilde{\omega}_{12}\left(u_1-u_2\right)=e^{2u_1}+\widetilde{c},\\
							&\widetilde{\omega}_{12}\left(u_2-u_1\right)=\widetilde{c},
						\end{aligned}
						\right.
					\end{aligned}
				\end{equation*}
				where $\widetilde{c}$ and $c$ have the same sign, satisfying $\widetilde{c}=\frac{|V|}{2}c$. One easily sees that the equations only have one solution for $c<0$ and have no solutions for $c>0$. By computing that
				$$\det(DH_{\widetilde{f_2},\widetilde{f_1},\widetilde{c}}(u))=-2\widetilde{\omega}_{12}e^{u_1}<0,$$
				where $\widetilde{f_2}=(1,0),\widetilde{f_1}=0$, we derive that 
				\begin{equation*}
					\begin{aligned}
						d_{f_2,f_1,c}=d_{f_{2,1},f_{1,1},c}=d_{\widetilde{f_2},\widetilde{f_1},\widetilde{c}}=\left\{
						\begin{aligned}
							-1 &,   &c<0,\\
							0&, &c>0.\\
						\end{aligned}
						\right.
					\end{aligned}
				\end{equation*}
				
				For case $(b)$, 
				there exists $f_2(x_0)<0$, and we make the homotopy transformation
				$$f_{2,t}(x)=\left\{
				\begin{aligned}
					&(1-t)f_2(x_0)-t, &\ x=x_0,\\
					&(1-t)f_2(x), &x\neq x_0,
				\end{aligned}
				\right.$$
				and 
				$$f_{1,t}(x)=(1-t)f_1(x).$$
				For $t=1$, we apply Lemma~\ref{lm3} and derive the following equations on $\widetilde{G}$:
				\begin{equation*}
					\begin{aligned}
						\left\{
						\begin{aligned}
							&\widetilde{\omega}_{12}\left(u_1-u_2\right)=-e^{2u_1}+\widetilde{c},\\
							&\widetilde{\omega}_{12}\left(u_2-u_1\right)=\widetilde{c}.
						\end{aligned}
						\right.
					\end{aligned}
				\end{equation*}
				It implies that
				\begin{equation*}
					\begin{aligned}
						d_{f_2,f_1,c}=\left\{
						\begin{aligned}
							1 &,   &c>0,\\
							0&, &c<0.\\
						\end{aligned}
						\right.
					\end{aligned}
				\end{equation*}
				
				For case $(c)$, we let
				$$f_{2,t}(x)=(1-t)f_2(x)$$
				and
				$$f_{1,t}(x)=\left\{
				\begin{aligned}
					&(1-t)f_1(x_0)+t, &\ x=x_0,\\
					&(1-t)f_1(x), &x\neq x_0.
				\end{aligned}
				\right.$$
				By Lemma~\ref{lm3}, this derives the equations on $\widetilde{G}$:
				\begin{equation*}
					\begin{aligned}
						\left\{
						\begin{aligned}
							&\widetilde{\omega}_{12}\left(u_1-u_2\right)=e^{u_1}+\widetilde{c},\\
							&\widetilde{\omega}_{12}\left(u_2-u_1\right)=\widetilde{c},
						\end{aligned}
						\right.
					\end{aligned}
				\end{equation*}
				and one easily sees that
				\begin{equation*}
					\begin{aligned}
						d_{f_2,f_1,c}=\left\{
						\begin{aligned}
							-1 &,   &c<0,\\
							0&, &c>0.\\
						\end{aligned}
						\right.
					\end{aligned}
				\end{equation*}
				
				For case $(d)$, we first use the homotopy transformation
				$$f_{2,t}(x)=\left\{
				\begin{aligned}
					&(1-t)f_2(x)-t, &\ \text{if}\ f_2(x)<0,\\
					&(1-t)f_2(x), &\ \text{if}\ f_2(x)=0,
				\end{aligned}
				\right.$$
				and 
				$$f_{1,t}(x)=(1-t)f_1(x).$$
				Note that when $t=1$, we return to case $(b)$. Thus using the above result for case $(b)$, we get
				\begin{equation*}
					\begin{aligned}
						d_{f_2,f_1,c}=\left\{
						\begin{aligned}
							1 &,   &c>0,\\
							0&, &c<0.\\
						\end{aligned}
						\right.
					\end{aligned}
				\end{equation*}

			\end{proof}
			
			If we consider $c=0$, by a priori estimates for $\overline{f_1}\neq0$ we have proved in Theorem~\ref{tm2}, we can also compute the Brouwer degree as follows.
			
			\tm\label{tm4}
			Suppose that $u$ is a solution to \eqref{eq:maineq} and $\overline{f_1}\neq 0,\  c= 0$. Then we have
			\begin{equation*}
				\begin{aligned}
					d_{f_2,f_1,c}=\left\{
					\begin{aligned}
						1 &, \  \ \overline{f_1}>0 \ \text{in}\ (d),\\
						-1&, \ \  \overline{f_1}<0 \ \text{in}\ (a)\ \text{and}\ (c),\\
						0&, \ \ \text{otherwise}.
					\end{aligned}
					\right.
				\end{aligned}
			\end{equation*}
			\tmd
			
			\begin{proof}
				
				For case $(a)$, if $\overline{f_1}>0$, we let
				$$f_{2,t}(x)=\left\{
				\begin{aligned}
					&(1-t)f_2(x)+t, &\ x=x_0,\\
					&(1-t)f_2(x), &\ x\neq x_0,
				\end{aligned}
				\right.$$
				and
				$$f_{1,t}(x)=\left\{
				\begin{aligned}
					&(1-t)f_1(x)+t, &\ x=x_0,\\
					&(1-t)f_1(x), &\ x\neq x_0.
				\end{aligned}
				\right.$$
				Note that 
				$$\overline{f_{1,t}}=(1-t)\overline{f_1}+t\geqslant\min\{1,\overline{f_1}\}>0.$$
				By Theorem~\ref{tm2} we know this homotopy transformation doesn't change the value of the Brouwer degree. Thus by Lemma~\ref{lm3}, we reduce the graph $G$ to a two-vertex graph. Let $t=1$ and we get the equations
				\begin{equation*}
					\begin{aligned}
						\left\{
						\begin{aligned}
							&\widetilde{\omega}_{12}\left(u_1-u_2\right)=e^{2u_1}+e^{u_1},\\
							&\widetilde{\omega}_{12}\left(u_2-u_1\right)=0.
						\end{aligned}
						\right.
					\end{aligned}
				\end{equation*}
				We easily find that the equations have no solutions. This implies that 
				$$d_{f_2,f_1,c}=0.$$
				If $\overline{f_1}<0$, we apply the same homotopy transformation to $f_2$, and let
				$$f_{1,t}(x)=\left\{
				\begin{aligned}
					&(1-t)f_1(x)-t, &\ x=x_0,\\
					&(1-t)f_1(x), &\ x\neq x_0.
				\end{aligned}
				\right.$$
				Then setting $t=1$ and applying Lemma~\ref{lm3}, we obtain the following equations on the reduced graph
				\begin{equation*}
					\begin{aligned}
						\left\{
						\begin{aligned}
							&\widetilde{\omega}_{12}\left(u_1-u_2\right)=e^{2u_1}-e^{u_1},\\
							&\widetilde{\omega}_{12}\left(u_2-u_1\right)=0.
						\end{aligned}
						\right.
					\end{aligned}
				\end{equation*}
				It is not difficult to see that there exists a unique  solution $u=(u_1,u_2)=(0,0)$, and 
				$$d_{f_2,f_1,c}=\sgn \det \left(
				\begin{matrix}
					\widetilde{\omega}_{12}-2e^{2u_1}+e^{u_1} & -\widetilde{\omega}_{12}\\
					-\widetilde{\omega}_{12} & \widetilde{\omega}_{12}
				\end{matrix}
				\right)=\sgn\left(-\widetilde{\omega}_{12}\right)=-1.$$
				
				For case $(b)$, summing both sides of the equation, we obtain
				$$0=\sum_{ V}\left(f_2e^{2u}+f_1e^u\right)<0,$$
				which is a contradiction. Thus the equation has no solutions, and
				$$d_{f_2,f_1,c}=0.$$
				
				For case $(c)$, we consider $\overline{f_1}>0$ first. Let
				$$f_{2,t}(x)=(1-t)f_2(x)$$
				and
				$$f_{1,t}(x)=\left\{
				\begin{aligned}
					&(1-t)f_1(x)+t, &\ x=x_0,\\
					&(1-t)f_1(x), &\ x\neq x_0.
				\end{aligned}
				\right.$$
				Note that $f_1\geqslant0, f_{1,1}\geqslant0$ and $f_1\neq0$. Similar to the argument for case $(b)$, we have 
				$$d_{f_2,f_1,c}=0.$$
				For $\overline{f_1}<0$, we choose a vertex $x_1\in V$ which is not the same vertex as $x_0$. We transform $f_{2,t}$ as above and 
				$$f_{1,t}(x)=\left\{
				\begin{aligned}
					&(1-t)f_1(x)+t, &\ x=x_0,\\
					&(1-t)f_1(x)-et, &\ x=x_1,\\
					&(1-t)f_1(x), &\ x\neq x_0,x_1.
				\end{aligned}
				\right.$$
				We note that $\overline{f_{1,t}}<0$ and has a negative upper bound. Thus applying Lemma~\ref{lm3}, for $t=1$ we derive the equations
				\begin{equation*}
					\begin{aligned}
						\left\{
						\begin{aligned}
							&\widetilde{\omega}_{12}\left(u_1-u_2\right)=e^{u_1},\\
							&\widetilde{\omega}_{12}\left(u_2-u_1\right)=-e^{u_2+1}.
						\end{aligned}
						\right.
					\end{aligned}
				\end{equation*}
				The unique solution is $u=(u_1,u_2)=(\ln\widetilde{\omega}_{12},\ln\widetilde{\omega}_{12}-1)$, and we have
				$$d_{f_2,f_1,c}=-1.$$
				
				For case $(d)$, if $\overline{f_1}<0$, we transform that
				$$f_{2,t}(x)=\left\{
				\begin{aligned}
					&(1-t)f_2(x)-t, &\ \text{if}\ f_2(x)<0,\\
					&(1-t)f_2(x), &\ \text{if}\ f_2(x)=0,
				\end{aligned}
				\right.$$
				and 
				$$f_{1,t}(x)=\left\{
				\begin{aligned}
					&(1-t)f_1(x)-t, &\ \text{if}\ f_2(x)<0,\\
					&(1-t)f_1(x), &\ \text{if}\ f_2(x)=0.
				\end{aligned}
				\right.$$
				Applying Lemma~\ref{lm3} we remove the vertices where $f_2=f_1=0$, and on a new weighted connected finite graph, we have the equation for $t=1$
				$$-\Delta u=-e^{2u}-e^u<0.$$
				Summing both sides, and we know that the equation has no solutions. Thus
				$$d_{f_2,f_1,c}=0.$$
				If $\overline{f_1} >0$, we transform $f_{2,t}$ as above, and
				$$f_{1,t}(x)=\left\{
				\begin{aligned}
					&(1-t)f_1(x)+t, &\ \text{if}\ f_2(x)<0,\\
					&(1-t)f_1(x), &\ \text{if}\ f_2(x)=0.
				\end{aligned}
				\right.$$
				Repeat the above process, and we get the equation
				$$\Delta u=e^u(e^u-1)$$
				on a new weighted connected finite graph $\widetilde{G}=(\widetilde{V},\widetilde{E},\widetilde{\omega},1)$. For any solution $u$ to this equation, suppose that 
				$$\max_{x\in \widetilde{V}}u(x)=u(y)>0.$$
				This implies $\Delta u(y)>0$, which means there exists a vertex $y_1\in \widetilde{V}$ satisfying $u(y_1)>u(y)$, and this is a contradiction. Repeat the same argument for $\min_{x\in \widetilde{V}}u(x)$, we derive that $u\equiv0$, and it is the unique solution to this equation. To compute the Brouwer degree, note that
				$$d_{f_2,f_1,c}=\sgn \det \left(-\Delta+I\right).$$
				It is known that $-\Delta$ is semi-definite. Thus
				$$d_{f_2,f_1,c}=1.$$

			\end{proof}
			
			\
			
			\section{Existence results}\label{se5}
			In this section we prove the existence results for equation \eqref{eq:maineq} when $n=2$. It is known that there exists at least one solution if the Brouwer degree is not equal to zero. Thus in this section our main aim is to study the existence of solutions to the equation when the Brouwer degree is equal to zero. 
			
			\subsection{Simple cases}\label{sub5.1}
			
			By the Kronecker existence theorem and the results in Section~\ref{s4}, one easily sees the existence results in the following theorems.
			
			\tm\label{tm5}\label{5.1}
			If $c>0$ in cases $(b)$ and $(d)$, or $c<0$ in cases $(a)$ and $(c)$,  equation \eqref{eq:maineq} must have at least one solution.
			\tmd
			
			\tm\label{tm6}
			When $c=0$, if we suppose that $\overline{f_1}>0$ in case $(d)$, or $\overline{f_1}<0$ in cases $(a)$ and $(c)$, equation \eqref{eq:maineq} must have at least one solution.
			\tmd
			For the simple case $(b)$, we get the following result.
			\tm\label{tm7}
			Equation \eqref{eq:maineq} has no solutions in case $(b)$ if $c\leqslant0$. 
			\tmd

			In the rest of this section, we only consider the remaining cases.		    
			\begin{enumerate}[(A)]
				\item $c\geqslant0$ in cases $(a)$ and $(c)$.
				\item $c\leqslant0$ in case $(d)$.
			\end{enumerate}		   
			Note that the Brouwer degree is equal to zero, and we need to go back to equation \eqref{eq:maineq} itself to see if there exists a solution. Suppose that every critical point of $J_{f_2,f_1,c}$ is nondegenerate, then we deduce that the equation \eqref{eq:maineq} has no solutions or at least two different solutions. Otherwise,  equation \eqref{eq:maineq} may have only one solution.

			\subsection{Some cases for $\mathbf{c\neq0}$}\label{5.2}
			
			We discuss the existence results for $c\neq0$
			in this subsection under certain conditions for $\overline{f_1}$. The main method is sub- and supersolutions in Lemma ~\ref{lm2}, and we prove that equation \eqref{eq:maineq} has at least one solution within a certain range of $c$, and no solutions outside this range.
			
			For case $(A)$, if there exists a solution $u_1$ for $c=c_1>0$, for any $0<c_2<c_1$, we note that
			$$-\Delta u_1=f_2 e^{2u_1}+f_1e^{u_1}+c_1>f_2 e^{2u_1}+f_1e^{u_1}+c_2.$$
			This shows that $u_1$ is a supersolution to the equation
			\begin{equation}\label{eq:c2}
				\begin{aligned}
					-\Delta u=f_2e^{2u}+f_1e^{u}+c_2.
				\end{aligned}
			\end{equation}
			By choosing $u_2=-M$, where $M$ is a sufficiently large positive constant, we find that $u_2$ can be a subsolution to equation \eqref{eq:c2}, and we can assume that $u_2<u_1$. Consider the minimizing problem
			$$\inf_{u_2\leqslant u\leqslant u_1}J_{f_2,f_1,c_2}(u).$$
			Let $\{v_i\}_{i=1}^{+\infty}$ be a minimizing sequence which satisfy $u_1\leqslant v_i\leqslant u_2$ and
			$$\lim_{i\rightarrow+\infty}J_{f_2,f_1,c_2}(v_i)=\inf_{u_1\leqslant u\leqslant u_2}J_{f_2,f_1,c_2}(u).$$
			Since we discuss on a finite graph, it is not difficult to choose a subsequence converging to a function $v$, which is a minimizer of $J_{f_2,f_1,c_2}$ in $\{u:V\rightarrow\mathbb{R}:u_1\leqslant u\leqslant u_2\}$, and is a solution to \eqref{eq:c2} by Lemma~\ref{lm2}. This yields that equation \eqref{eq:maineq} has a solution for any constant $0<c<c_1$.
			
			The above discussion also applies to case $(B)$ by choosing $u_2$ a sufficiently large positive constant as a supersolution. Thus, we derive that equation \eqref{eq:maineq} has a solution when $c\in (0,c_{f_2,f_1})$ in case $(A)$, and $c\in (-c_{f_2,f_1},0)$ in case $(B)$, where $c_{f_2,f_1}$ may be zero, a positive constant, or infty, depending only on $f_2,f_1,G$. In the following part of this subsection, by adding some restrictions on $\overline{f_1}$, we prove that $c_{f_2,f_1}$ cannot be zero. In Subsection~\ref{5.4}, we will prove that $c_{f_2,f_1}$ is a positive constant, and if removing the restrictions on $f_1$, we can also prove the same results as long as we make some reasonable assumptions about $f_2$. 
			
			In case $(A)$, we add an additional condition that $\overline{f_1}<0$, and suppose $\overline{f_1}>0$ in case $(B)$. This yields the existence of at least one solution when $c$ is sufficiently close to zero.
			
			\tm\label{tm8}
			Suppose that $\overline{f_1}<0$ in case $(A)$, and $\overline{f_1}>0$ in case $(B)$. Then equation \eqref{eq:maineq} has at least one solution when $c$ is sufficiently close to zero. 
			\tmd
			
			\begin{proof}
				The proof is based on the method of sub- and supersolutions. By the above analysis on this subsection, we just need to construct a suitable subsolution and supersolution.
				
				Case $(A)$: We have already obtained a subsolution $u_2=-M$ where $M$ is a sufficiently large positive constant. To construct a supersolution, we consider the constants $a>0$ and $b=\ln a$, and the function $v:V\rightarrow \mathbb{R}$ which satisfies the equation
				\begin{equation}\label{eq:v}
					\begin{aligned}
						\Delta v=a(f_2-\overline{f_2})+f_1-\overline{f_1}.
					\end{aligned}
				\end{equation}			 
				We choose the value of $a$ later. It is known that 
				$$\dim\ker \Delta=1.$$
				Thus by restricting $v$ to be zero at a certain vertex, the function $v$ is the unique solution of \eqref{eq:v}. We limit the range of values of $a$ to $(0,1]$, and thus $v$ is uniformly bounded.  Note that
				\begin{equation*}
					\begin{aligned}
						&-\Delta(av+b)=a^2(f_2-\overline{f_2})+a(f_1-\overline{f_1})\\
						=&f_2 e^{2(av+b)}-a^2f_2 \left(e^{2av}-1\right)-a^2\overline{f_2}
						+f_1e^{av+b}
						-af_1\left(e^{av}-1\right)-a\overline{f_1}\\
						=&f_2 e^{2(av+b)}+f_1e^{av+b}-a\left[af_2\left(e^{2av}-1\right)+a\overline{f_2}+f_1\left(e^{av}-1\right)+\overline{f_1}\right].
					\end{aligned}
				\end{equation*}
				Let $a$ be sufficiently small satisfying
				$$\left|af_2\left(e^{2av}-1\right)+a\overline{f_2}+f_1\left(e^{av}-1\right)\right|\leqslant -\frac{1}{2}\overline{f_1}.$$
				This yields that
				$$-\Delta(av+b)\geqslant f_2 e^{2(av+b)}+f_1 e^{av+b}-\frac{1}{2}a\overline{f_1}.$$
				Thus for $0<c<-\frac{1}{2}a\overline{f_1}$, equation \eqref{eq:maineq} has a supersolution $av+b$, and then has at least one solution.
				
				Case $(B)$: Note that $u_2=M>0$ sufficiently large is a supersolution of \eqref{eq:maineq}. By repeating the same process in Case $(A)$, we derive that
				$$-\Delta(av+b)\leqslant f_2 e^{2(av+b)}+f_1e^{av+b}-\frac{1}{2}a\overline{f_1}.$$
				For $-\frac{1}{2}a\overline{f_1}<c<0$, it is not difficult to see that $av+b$ is a subsolution.
			\end{proof}
			
			\subsection{$\mathbf{c=0}$}
			In this subsection we concentrate on the equation
			\begin{equation}\label{eq:c=0}
				\begin{aligned}
					-\Delta u=f_2 e^{2u}+f_1e^u.
				\end{aligned}
			\end{equation}			
			The unsolved cases are that $\overline{f_1}\geqslant0$ in cases $(a)$ and $(c)$, and $\overline{f_1}\leqslant0$ in case $(d)$. The existence results actually depend on both $f_2$ and $f_1$. In the following theorem, we give some sufficient conditions for equation \eqref{eq:maineq} to have a solution.
			
			\tm\label{tm9}
			We suppose that $\overline{f_1}>0$ in cases $(a)$ and $(c)$, and $\overline{f_1}<0$ in case $(d)$. Then for each case, there exists a function $f_2^*(\max_{x\in V} |f_1(x)|, \overline{f_1})$ depending only on $\max_{x\in V} |f_1(x)|, \overline{f_1}$ and $G$, and we derive that
			\begin{enumerate}[(a)]
				\item Equation \eqref{eq:c=0} has a solution when $f_2\leqslant f_2^*$ in case $(a)$, where $f_2^*$ changes sign.
				\item Equation \eqref{eq:c=0} has a solution when $f_2\leqslant f_2^*\leqslant0$ in case $(c)$, where $\overline{f_2^*}<-C(\max_{x\in V} |f_1(x)|)\overline{f_1}$.
				\item Equation \eqref{eq:c=0} has a solution when $0\geqslant f_2\geqslant f_2^*$ in case $(d)$.			  	
			\end{enumerate}
			\tmd
			
			\begin{proof}
				Assume that 
				$$-K\leqslant f_1\leqslant K.$$
				Consider the equation 
				\begin{equation}\label{eq:H}
					\begin{aligned}
						-\Delta u=He^u,
					\end{aligned}
				\end{equation}
				where the function $H$ changes sign and satisfies 
				$$\overline{H}<0,\ -tK\leqslant H\leqslant tK,$$
				and the constant $t>1$ we will choose later. Using the results in \cite{sun2022brouwer}, we know a priori estimates for solutions to equation \eqref{eq:H} that 
				$|u|\leqslant C(tK)$, and equation \eqref{eq:H} must have at least one solution. 
				
				For case $(a)$, we consider the equation 
				$$-\Delta u_k=f_2e^{2u_k}+f_1e^{u_k}-c_k$$
				and the corresponding solution $u_k$, where $c_k>0$ and  
				$$\lim_{k\rightarrow\infty}c_k=0.$$
				The existence of $u_k$ is based on Theorem~\ref{5.1}. If $\{u_k\}$ is uniformly bounded, by choosing a weak convergent subsequence, we can find a solution to \eqref{eq:c=0}. Otherwise we can assume that 
				$$\lim_{k\rightarrow\infty}\|u_k\|_{l^{\infty}}=+\infty.$$
				By the proof of Theorem~\ref{tm1}, we know that it will lead to a contradiction if 
				$$\lim_{k\rightarrow\infty}\max_{x\in V} u_k(x)=+\infty.$$
				Then by Theorem~\ref{tm:alternative}, for any vertex $x\in V$, we have
				$$\lim_{n\rightarrow\infty} u_k(x)=-\infty.$$
				Thus we can choose some $c_k>0$ and sufficiently negative $u_k$ as a subsolution to equation \eqref{eq:c=0}. We only need to construct a supersolution.
				
				Note that
				\begin{equation*}
					\begin{aligned}
						-\Delta u=He^u= (H-f_1)e^{-u}e^{2u}+f_1e^u,
					\end{aligned}
				\end{equation*}
				and recall that $|u|\leqslant C(tK)$. We consider a sign-changing function $f_2^*$ which satisfies
				$f_2^*\leqslant (H-f_1)e^{-v}$, where $v$ is a solution to \eqref{eq:H}. We can claim the existence of $f_2^*$. For example, we assume that $f_1$ attains its maximum value at the vertex $x_0$, and we can choose 
				\begin{equation*}
					\begin{aligned}
						H(x)=\left\{
						\begin{aligned}
							& f_1+\frac{\max_{x\in V} f_1(x)}{2m(x)},\ x=x_0, \\
							& f_1-\frac{|V|\overline{f_1}+\max_{x\in V} f_1(x)}{\left(|V|-1\right)m(x)}, \ x\neq x_0.\\
						\end{aligned}
						\right.
					\end{aligned}
				\end{equation*}
				Since  
				\begin{equation*}
					\begin{aligned}
						\sum_{V}H&=\sum_{V}f_1+\frac{1}{2}\max_{x\in V} f_1(x)-\frac{|V|\overline{f_1}+\max_{x\in V} f_1(x)}{\left(|V|-1\right)}\cdot\left(|V|-1\right)\\
						&=-\frac{1}{2}\max_{x\in V} f_1(x)<0,   
					\end{aligned}
				\end{equation*}				
				we know $H$ changes sign and
				equation \eqref{eq:H} has a solution $v$. We assume that $m=\min_{x\in V} m(x)>0$, and clearly 
				$$|H(x)|\leqslant K\left(1+\frac{3}{m}\right).$$
				Thus we can choose $t=1+\frac{3}{m}$. Let
				\begin{equation*}
					\begin{aligned}
						f_2^*(x)=\left\{
						\begin{aligned}
							& \left(H-f_1\right)e^{-C(tK)},\ x=x_0, \\
							& \left(H-f_1\right)e^{C(tK)}, \ x\neq x_0.\\
						\end{aligned}
						\right.
					\end{aligned}
				\end{equation*}
				One easily sees that $f_2^*$ changes sign and 
				$$\overline{f_2^*}\leqslant \overline{H-f_1}<0.$$
				Thus for $f_2\leqslant f_2^*$, we have
				$$-\Delta v\geqslant f_2^*e^{2v}+f_1e^v\geqslant f_2e^{2v}+f_1e^v,$$
				and $v$ is a supersolution to \eqref{eq:c=0}. This yields the existence of solutions to \eqref{eq:c=0}.
				
				For case $(c)$, we repeat the above process. Let $t>3$, $H=f_1-g$, where $0\leqslant g\leqslant (t-1)\max_{x\in V} f_1(x)$ and $\overline{g}>\overline{f_1}$. Since in case $(c)$ there exists a vertex $x_0$ satisfying $f_2(x_0)=0$ and $f_1(x_0)>0$, we can reasonably assume that $g(x_0)=0$, which makes $H$ sign-changing. Note that for any $t>3$, there must exist such $g$. Under these conditions, equation \eqref{eq:H} has a solution $v$ satisfying $|v|\leqslant C(tK)$. Let
				$$f_2^*=-ge^{C(tK)},$$
				and for any $f_2\leqslant f_2^*$, $v$ is a supersolution to \eqref{eq:c=0}, which leads to a solution of \eqref{eq:c=0}. For a fixed $t>3$, let $K=\max_{x\in V} |f_1(x)|$. Then $f_2^*$ should satisfy the necessary condition
				$$ \overline{f_2^*}<-C(\max_{x\in V} |f_1(x)|)\overline{f_1}<0.$$
				
				For case $(d)$, consider $u_1=M$ a sufficiently large constant as a supersolution to \eqref{eq:c=0}. To find a subsolution, we apply the above method, and only need to construct $H,f_2^*$ such that $H$ changes sign and 
				$$ \left(H-f_1\right)e^{-v}\leqslant f_2^*\leqslant0,$$
				where $v$ is a solution to \eqref{eq:H}. We can let
				$$ H=f_1-\frac{1}{2}\max_{x\in V} f_1(x),\ t=\frac{3}{2},\ f_2^*=\left(H-f_1\right)e^{-C(\frac{3}{2}\max_{x\in V} |f_1(x)|)}\leqslant 0.$$ 
				Thus for any $f_2^*\leqslant f_2\leqslant0$ in case $(d)$, $v$ is a subsolution to \eqref{eq:c=0}, and equation \eqref{eq:c=0} must have at least one solution.
			\end{proof}
			
			\begin{remark}\label{re1}
				In the proof of Theorem~\ref{tm9}, there are many ways to choose $f_2^*$. However, we should indicate that these restrictions on $f_2$ are necessary in a certain sense. Removing these restrictions might make the equation unsolvable.
			\end{remark}
			
			We list some examples to support Remark~\ref{re1}.
			
			\begin{example}\label{ex1}
				For $\overline{f_1}>0$ in case $(a)$, suppose that $f_2\geqslant0$. Then equation \eqref{eq:c=0} has no solutions.

			\end{example}
			\begin{proof}
				Assume that there exists a solution $u$ of \eqref{eq:c=0}. Note that 
				$$\sum_{V}\left(f_2e^u+f_1\right)\geqslant \sum_{V}f_1>0.$$
				By the existence results in \cite{sun2022brouwer}, we deduce that equation \eqref{eq:c=0} has no solutions, which leads to a contradiction.
			\end{proof}
			\begin{example}\label{ex2}
				For $\overline{f_1}>0$ in case $(c)$, suppose that 
				$$\max_{x\in V} |f_2(x)|+\max_{x\in V} |f_1(x)|\leqslant K,\ 
				|\overline{f_1}|\geqslant K^{-1},$$
				and fix $f_1$. Then condition $(b)$ in Theorem~\ref{tm2} holds. Thus for any $\overline{f_2}>-e^{-C'}\overline{f_1}$ satisfying case $(c)$, we have
				$$\overline{f_2e^u+f_1}>0,$$
				where $C'$ is the constant in Theorem~\ref{tm2}, which results in equation \eqref{eq:c=0} being unsolvable.
			\end{example}
			
			\begin{example}\label{ex3}
				For $\overline{f_1}<0$ in case $(d)$, let $G=(V,E,\omega,m)$ be a graph with only two vertices $x_1,x_2$, and $m\equiv1$, $\omega_{x_1x_2}=1$. We denote by $u(x_1)=x$, $ u(x_2)=y$, and  suppose that 
				$$f_2(x_1)=0,\ f_2(x_2)=-K,\ f_1(x_1)=-a,\ f_1(x_2)=b,$$
				where $a>b>0$, $K>0$. Then we need to solve
				\begin{equation*}
					\begin{aligned}
						\left\{
						\begin{aligned}
							& x-y=-ae^x, \\
							& y-x=-Ke^{2y}+be^y.\\
						\end{aligned}
						\right.
					\end{aligned}
				\end{equation*}
				Denote by $t=y-x>0$, and the following holds:
				$$ae^x=be^y-Ke^{2y}=be^te^x-Ke^{2t}e^{2x}.$$
				This yields that
				$$\frac{K}{a}t=Ke^x=\dfrac{be^t-a}{e^{2t}}.$$
				However, if $K$ is sufficiently large, the equation about $t>0$ is unsolvable since $a>b$.
				
				In addition, for any $f_2$ satisfying $f_2(x_1)\leqslant0,\ f_2(x_2)\leqslant-K$, by the method of sub- and supersolutions, we easily obtain that equation \eqref{eq:c=0} has no solutions; otherwise, the solution would serve as a subsolution for the above example.
			\end{example}
			
			For $\overline{f_1}=0$, we also have similar results.
			\begin{corollary}\label{co5.10}
				For $\overline{f_1}=0$ in cases $(a),(c),(d)$, there exists a function $f_2^*(f_1)$ depending on $f_1$ and $G$ for each case, and equation \eqref{eq:c=0} is solvable when $f_2\leqslant f_2^*$ in cases $(a),(c)$, $0\geqslant f_2\geqslant f_2^*$ in case $(d)$. In particular, when $f_1=0$, equation \eqref{eq:c=0} has a solution only if $f_2$ changes sign and $\overline{f_2}<0$.
			\end{corollary}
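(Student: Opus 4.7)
The plan is to extend Theorem~\ref{tm9} to the borderline case $\overline{f_1}=0$, treating $f_1\equiv 0$ as a separate special case. The key observation is that whenever $f_1\not\equiv 0$ and $\overline{f_1}=0$, the function $f_1$ automatically changes sign on $V$. This fact allows the auxiliary Kazdan-Warner data $H$ used in Theorem~\ref{tm9} to be constructed by taking only a small perturbation of $f_1$.

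Concretely, for cases $(a)$ and $(c)$, Theorem~\ref{5.1} furnishes a sub-solution by the same approximation argument as in Theorem~\ref{tm9}: for small $c_k>0$, equation \eqref{eq:maineq} with right-hand side $f_2 e^{2u}+f_1 e^u-c_k$ admits solutions $u_k$, and either $\{u_k\}$ is uniformly bounded (giving the desired solution directly) or $u_k\to-\infty$, in which case a sufficiently negative $u_k$ serves as a sub-solution to \eqref{eq:c=0}. To produce a super-solution, fix a small $\varepsilon>0$ and set $H=f_1-\varepsilon$; since $f_1$ changes sign so does $H$, and $\overline{H}=-\varepsilon<0$, so \cite{sun2022brouwer} yields a solution $v$ of $-\Delta v=He^v$ with an a priori bound $|v|\leqslant C$. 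Define
\[f_2^*(x)=(H(x)-f_1(x))\,e^{-\sigma(x)C}=-\varepsilon\,e^{-\sigma(x)C},\]
with the sign $\sigma(x)=\pm 1$ chosen exactly as in the proof of Theorem~\ref{tm9}, so that $f_2\leqslant f_2^*$ forces $v$ to satisfy $-\Delta v\geqslant f_2 e^{2v}+f_1 e^v$; in particular $f_2^*\leqslant 0$, so case $(c)$ is compatible with $f_2\leqslant 0$, and Lemma~\ref{lm2} supplies a solution. For case $(d)$, the roles reverse: $M\gg 1$ is a super-solution, and one takes $H=f_1-\varepsilon\chi_S$ with $S\subset V$ chosen so that $\overline{H}<0$ and $H$ still changes sign (possible since $f_1$ is positive somewhere and has mean zero), from which $f_2^*:=(H-f_1)e^{-C}\leqslant 0$ makes the corresponding $v$ a sub-solution whenever $0\geqslant f_2\geqslant f_2^*$.

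For the special case $f_1\equiv 0$, equation \eqref{eq:c=0} reduces to $-\Delta u=f_2 e^{2u}$; the substitution $w=2u$ converts this into the Kazdan-Warner equation $-\Delta w=2f_2 e^w$, whose solvability on a finite connected weighted graph requires $2f_2$ (hence $f_2$) to change sign and to have strictly negative mean by \cite{grigor2016kazdan,sun2022brouwer}, yielding the stated necessary condition.

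The main obstacle is verifying that the perturbation producing $H$ yields both a sign-changing $H$ and the correct sign of $H-f_1$ so that $f_2^*$ is compatible with the structural constraint $f_2\leqslant 0$ in case $(d)$; this compatibility rests entirely on the automatic sign-change of $f_1$ under $\overline{f_1}=0$, together with the choice of a nonnegative perturbation $\varepsilon\chi_S$ in case $(d)$ rather than the simpler constant shift $-\varepsilon$ used in cases $(a)$ and $(c)$.
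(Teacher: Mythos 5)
Your overall strategy for $f_1\not\equiv 0$ (exploit that $\overline{f_1}=0$ forces $f_1$ to change sign, build an auxiliary sign-changing $H$ with $\overline H<0$, and rerun the sub-/supersolution scheme of Theorem~\ref{tm9}), and your reduction of the $f_1\equiv 0$ case to the Kazdan--Warner equation via $w=2u$, are both in the right spirit; the paper instead perturbs $f_1$ itself to $g=f_1+\epsilon$ on $\{f_1\neq 0\}$ (so that $\overline g>0$, resp.\ $\overline g<0$ in case $(d)$) and invokes Theorem~\ref{tm9} as a black box, letting the resulting solution of $-\Delta v=f_2e^{2v}+ge^v$ serve as the super-/subsolution. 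However, your specific choice $H=f_1-\varepsilon$ in cases $(a)$ and $(c)$ creates a genuine gap: it gives $H-f_1\equiv-\varepsilon<0$, hence $f_2^*=-\varepsilon e^{-\sigma(x)C}<0$ at \emph{every} vertex, no matter how $\sigma$ is chosen. But case $(a)$ requires $f_2(x_0)>0$ for some $x_0$, and case $(c)$ requires $f_2(x_0)=0$ at the distinguished vertex; in either case no admissible $f_2$ can satisfy $f_2\leqslant f_2^*$, so your version of the corollary is vacuous exactly where it is supposed to have content. The issue is not the sign $\sigma(x)$ but the sign of $H-f_1$: in Theorem~\ref{tm9} the perturbation $H-f_1$ is taken \emph{positive} at $x_0$ in case $(a)$ (so $f_2^*$ changes sign) and \emph{zero} at $x_0$ in case $(c)$ (so $f_2^*(x_0)=0$), with the mass pushed negative on the remaining vertices to keep $\overline H<0$. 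A constant downward shift cannot reproduce this.

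The repair is either to mimic Theorem~\ref{tm9}'s vertex-dependent perturbation directly (e.g.\ $H=f_1+\tfrac{\max f_1}{2m(x_0)}$ at $x_0$ and $H=f_1-\delta$ elsewhere with $\delta$ tuned so $\overline H<0$, which still changes sign because $f_1$ does), or to do what the paper does: replace $f_1$ by $g\geqslant f_1$ with $\overline g>0$ and $g$ in the same case, apply Theorem~\ref{tm9} to $(f_2,g)$ to get a solution $v$ of $-\Delta v=f_2e^{2v}+ge^v$, and observe that $v$ is then a supersolution of \eqref{eq:c=0} since $g\geqslant f_1$. Your case $(d)$ argument and your treatment of $f_1\equiv 0$ are fine and essentially match the paper.
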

			
			\begin{proof}
				To derive the existence of solutions, we use the method of sub- and supersolutions.
				
				The situation of $f_1=0$ only occurs in case $(a)$. By the results in \cite{sun2022brouwer} we easily obtain that the solutions exist only if  $f_2$ changes sign and $\overline{f_2}<0$. Thus we consider $f_1\neq0$ in the following proof.
				
				For cases $(a),(c)$, by the proof of Theorem~\ref{tm9}, we only need to  construct a supersolution. Let 
				$$g=\left\{
				\begin{aligned}
					& f_1+\epsilon, \ f_1\neq0,\\
					& f_1,\ f_1=0,\\
				\end{aligned}
				\right.$$
				where $\epsilon$ is a small constant satisfying $0<\epsilon <\min_{f_1\neq 0}|f_1(x)|$. The new function $g$ still belongs to the original case ($(a)$ or $(c)$). Applying Theorem~\ref{tm9}, we obtain that there exists a function $f_2^*$ and the equation 
				$$-\Delta v=f_2e^{2v}+ge^v$$
				is solvable for any $f_2\leqslant f_2^*$. Thus the solution $v$ can serve as a supersolution of \eqref{eq:c=0}.
				
				For case $(d)$, we need to construct a subsolution by the proof of Theorem~\ref{tm9}. Letting 
				$$g=\left\{
				\begin{aligned}
					& f_1-\epsilon, \ f_1\neq0,\\
					& f_1,\ f_1=0,\\
				\end{aligned}
				\right.$$
				and repeating the above discussion, we get the existence result. 
			\end{proof}
			\subsection{The complete discussion for existence results}\label{5.4}
			
			In this subsection, we discuss the remaining cases in subsection~\ref{5.2} and the value estimates of $c_{f_2,f_1}$.  Recall that in Theorem~\ref{tm8}, we only consider $\overline{f_1}<0$ in case $(A)$ and  $\overline{f_1}>0$ in case $(B)$. If removing the restrictions on $\overline{f_1}$ and adding some reasonable conditions on $f_2$, using Theorem~\ref{tm9} and Corollary~\ref{co5.10}, we can obtain the existence of solutions when $c$ is sufficiently close to zero. 
			
			\tm\label{tm10}
			Suppose that $\overline{f_1}\geqslant0, c>0$ in cases $(a)$ and $(c)$, and $\overline{f_1}\leqslant0, c<0$ in case $(d)$, where $f_1$ is not always zero. For each case there exists a function $\widetilde{f_2}(f_1)\neq0$ depending only on $f_1$ and $G$, which makes equation \eqref{eq:maineq} solvable for $c$ sufficiently close to zero if assuming $f_2\leqslant \widetilde{f_2}(f_1)$ in cases $(a)$ and $(c)$, and $f_2\geqslant \widetilde{f_2}(f_1)$ in case $(d)$.
			\tmd
			
			\begin{proof}
				Let $\epsilon$ be a constant satisfying
				$$0<\epsilon<\max_{x\in V}f_1(x).$$
				For cases $(a)$ and $(c)$, consider a new function
				$$f_1'=f_1+\epsilon,$$
				which also belongs to the original case ($(a)$ or $(c)$). By Theorem~\ref{tm9}, we derive that there exists $\widetilde{f_2}(f_1)$ such that for $f_2\leqslant \widetilde{f_2}$, the equation
				$$-\Delta u=f_2e^{2u}+f_1'e^u$$
				has a solution $v$. Note that
				$$-\Delta v=f_2e^{2v}+f_1e^v+\epsilon e^v.$$
				This implies that for any constant $0<c<\min_{x\in V}\epsilon e^v$, $v$ is a supersolution to equation \eqref{eq:maineq}. Recall that $u=-M$ sufficiently negative is a subsolution. Let us choose $-M<v$, and we can solve equation \eqref{eq:maineq} by the method of sub- and supersolutions.
				
				For case $(d)$,  we consider
				$$f_1'=f_1-\epsilon,$$
				which also belongs to case $(d)$. By similar discussion and noting that $u=M$ sufficiently large is a supersolution, we easily obtain the existence of solutions to equation \eqref{eq:maineq} if $c$ is close to zero.
			\end{proof}
			
			\begin{remark}\label{re2}
				In particular, if $f_1\equiv0$ in Theorem~\ref{tm10}, which only occurs in case $(a)$, according to the results in \cite{sun2022brouwer}, equation \eqref{eq:maineq} is solvable if and only if $\overline{f_2}<0<\max_{x\in V}f_2(x)$.
			\end{remark}
			
			Combining Theorem~\ref{tm8} and Theorem~\ref{tm10}, we obtain that $c_{f_2,f_1}$ cannot be zero if $f_2,f_1,c$ satisfy the conditions in one of these two theorems. In the following part, we denote by $f_{1,-}(x)=\max\{-f_1(x),0\}$ and $f_{2,-}$ in the same way. We claim that under these conditions, $c_{f_2,f_1}$ must be a positive constant, and we give the estimates of $c_{f_2,f_1}$.
			
			\tm\label{tm11}\label{tm5.13}
			Suppose that $f_2,f_1$ satisfy the conditions in Theorem~\ref{tm8} or Theorem~\ref{tm10}.  For each case, there exists a positive constant $c_{f_2,f_1}$ depending on $f_2,f_1$, and $C_2$ depending on $G$. Then the following results hold:
			\begin{enumerate}[(a)]
				\item Equation \eqref{eq:maineq} has a solution for $0\leqslant c \leqslant c_{f_2,f_1}$ in case $(a)$, and we conclude that 
				$$0<c_{f_2,f_1}\leqslant C_2\frac{\max_{x\in V} |f_2-\frac{1}{2}\epsilon f_{1,-}^2|}{\max_{x\in V} (f_2-\frac{1}{2}\epsilon f_{1,-}^2)}+\frac{1}{\epsilon},$$
				where we let $\epsilon=0, \frac{1}{\epsilon}=0$ for $f_1\geqslant0$, and for $\min_{x\in V}f_1<0$ define  
				$$\epsilon= \frac{\max_{x\in V}f_2}{\max_{x\in V}f_{1,-}^{2}}.
				$$
				
				\item Equation \eqref{eq:maineq} has a solution for $0\leqslant c\leqslant c_{f_2,f_1}$ in case $(c)$, and
				$$0<c_{f_2,f_1}\leqslant C_2\left(\ln \frac{\max_{x\in V}|f_1|^2+\max_{x\in V} f_{2,-}}{f_1(x_0)^2}+1\right),$$
				where $x_0$ is the vertex mentioned in case $(c)$.
				
				\item equation \eqref{eq:maineq} has a solution for $ -c_{f_2,f_1}\leqslant c\leqslant0$ in case $(d)$, and 
				$$0<c_{f_2,f_1}	\leqslant \max_{x\in V,f_2(x)\neq 0}\frac{f_1^2}{4f_{2,-}}.$$		  	
			\end{enumerate}
			\tmd
			\begin{proof}
				In this proof, we use a positive constant $C_2$ that varies but has a uniform upper bound which depends only on $G$.
				
				In case $(a)$, we use the methods in \cite{ge2017kazdan,sun2022brouwer} to prove the estimates of $c_{f_2,f_1}$. Suppose $u$ is a solution to \eqref{eq:maineq}. Since $\Delta e^u\geqslant e^u\Delta u$, we have
				$$\left(\Delta-2c\right) e^{-2u}\geqslant 2f_2+2f_1e^{-u}.$$
				If $\min_{x\in V}f_1<0$, we suppose $c$ sufficiently large, and define $c'=c-\epsilon>0$. Note that $2f_1e^{-u}\geqslant -\epsilon f_{1,-}^2-\frac{1}{\epsilon}e^{-2u}$, we obtain
				$$\left(\Delta-2c'\right) e^{-2u}\geqslant 2f_2-\epsilon f_{1,-}^2:=H.$$
				Let $\xi$ be the solution of 
				$$\left(\Delta-2c'\right)\xi=H.$$
				Then by the maximum principle, 
				$$\xi\geqslant e^{-2u}>0.$$
				Thus assuming $c'$ larger than the largest eigenvalue of $-\Delta$,  we have
				\begin{equation*}
					\begin{aligned}
						0>-2\xi c'=\left(1-\frac{\Delta}{2c'}\right)^{-1}H=H+(2c')^{-1}\Delta H+O\left(c'^{-2}\max_{x\in V}|\Delta H|\right).
					\end{aligned}
				\end{equation*}
				Considering on the vertex where $H$ attains its maximum, this yields that
				$$c'\lesssim \frac{\max_{x\in V}|\Delta H|}{\max_{x\in V} H}\lesssim \frac{\max_{x\in V}| H|}{\max_{x\in V} H}.$$
				If $f_1\geqslant0$, since 
				$$\left(\Delta-2c\right) e^{-2u}\geqslant 2f_2+2f_1e^{-u}\geqslant 2f_2,$$
				we obtain the similar results without $\epsilon$, and one can check the proof.
				
				In case $(c)$, we let
				$$c=c_1t,\ t=\frac{\max_{x\in V}f_{2,-}}{\max_{x\in V}f_1^2 }>0.$$
				On the vertex where $u$ attains its minimum, we get
				$$-\max_{x\in V} f_{2,-}e^{2\min_{x\in V}u}-\max_{x\in V}|f_1|e^{\min_{x\in V}u}+c\leqslant 0.$$
				This  implies that
				\begin{equation*}
					\begin{aligned}
						e^{\min_{x\in V}u}\geqslant \frac{\max_{x\in V}|f_1|}{\max_{x\in V} f_{2,-}}\cdot \frac{\sqrt{1+4c_1t^2}-1}{2}.
					\end{aligned}
				\end{equation*}
				Note that on the vertex $x_0$, we have
				$$f_1(x_0)e^u+c=-\Delta u(x_0)\leqslant C_2\left(u(x_0)-\min_{x\in V}u\right).$$ 
				Let $l(x)= C_2x-f_1(x_0)e^x$, and one easily sees that
				$$\sup_{x\in \mathbb{R}}l(x)=l\left(\ln \frac{C_2}{f_1(x_0)}\right)=C_2\left(\ln \frac{C_2}{f_1(x_0)}-1\right).$$
				Thus, 
				\begin{equation*}
					\begin{aligned}
						c\leqslant &C_2\left[\ln \frac{C_2}{f_1(x_0)}-1-\ln \left(\frac{\max_{x\in V}|f_1|}{\max_{x\in V} f_{2,-}}\cdot \frac{\sqrt{1+4c_1t^2}-1}{2}\right)\right]\\
						& =C_2 \ln \left(\frac{C_2}{ef_1(x_0)}\cdot\frac{\max_{x\in V}f_{2,-}}{\max_{x\in V}|f_1| }\cdot \frac{2}{\sqrt{1+4c_1t^2}-1}\right)\\
					\end{aligned}
				\end{equation*}
				If $c_1t^2\geqslant1$, we have $\sqrt{1+4c_1t^2}\geqslant 1+\sqrt{c_1}t=1+\sqrt{ct}$, and obtain
				\begin{equation}\label{estimate:c1}
					\begin{aligned}
						\frac{C_2}{2}\ln c+c\leqslant \frac{C_2}{2}\ln \left(\frac{4C_2^2}{e^2}\frac{\max_{x\in V} f_{2,-}}{f_1(x_0)^2}\right).
					\end{aligned}
				\end{equation}
				If $c_1t^2<1$, since $$\frac{2}{\sqrt{1+4c_1t^2}-1}=\frac{\sqrt{1+4c_1t^2}+1}{2c_1t^2}<\frac{2}{ct},$$
				we derive that
				\begin{equation}\label{estimate:c2}
					\begin{aligned}
						C_2\ln c+c\leqslant C_2\ln \left(\frac{2C_2}{e}\frac{\max_{x\in V} |f_1|}{f_1(x_0)}\right).
					\end{aligned}
				\end{equation}
				Note that $\frac{\max_{x\in V} |f_1|}{f_1(x_0)}\geqslant1$.  Suppose $c_{f_2,f_1}\geqslant1$, and one deduces the results by combining \eqref{estimate:c1} and \eqref{estimate:c2}.

				In case $(d)$, on any vertex where $f_2=0$, we have
				$$-\Delta u=f_1e^u+c\leqslant0.$$
				If we suppose
				$$c<-\max_{x\in V,f_2(x)\neq 0}\frac{f_1^2}{4f_{2,-}},$$
				on vertices where $f_2<0$, we have
				$$-\Delta u=f_2\left(e^u+\frac{f_1}{2f_2}\right)^2-\frac{f_1^2}{4f_2}+c<0,$$
				which contradicts with $\sum_{V}\Delta u=0$.
				
				Finally, letting $c\rightarrow c_{f_2,f_1}$ in cases $(a),(c)$ and $c\rightarrow-c_{f_2,f_1}$ in case $(d)$, by Theorem~\ref{tm1} we get the convergence of the solution sequence. Thus for $c= c_{f_2,f_1}$ in cases $(a),(c)$ and $c=-c_{f_2,f_1}$ in case $(d)$, there also exists at least one solution. For the case $c=0$, we consider $c_1>0$ in case $(a)$ or $(c)$, which corresponds to a solution $u_1$ as a supersolution. Then we repeat the method in Theorem~\ref{tm9} and know the existence of subsolutions. In case $(d)$, one easily sees that by choosing $c_1<0$ with corresponding solution $u_1$, $u_1$ serves as a subsolution, while a sufficiently large positive constant can serve as a supersolution. These yield the existence of at least one solution.
			\end{proof}
			
			\begin{remark}
				If $u$ is a solution of \eqref{eq:maineq}, we let 
				$$g_2=f_2e^{2t},\ g_1=f_1e^t.$$
				Then $u-t$ is a solution of the equation
				$$-\Delta v=g_2e^{2v}+g_1e^v+c.$$
				This explains why the term $\frac{f_2}{f_1^2}$ appears in Theorem~\ref{tm11}.
			\end{remark}
			
			Now we have proved the existence results in all cases. Recall that in Subsection~\ref{sub5.1}, we discuss the number of solutions when the Brouwer degree is equal to zero. Suppose that $f_2,f_1$ satisfy the conditions in Theorem~\ref{tm8} or Theorem~\ref{tm10}, and by Theorem~\ref{tm5.13} we know that the equation has at least one solution if $0\leqslant c\leqslant c_{f_2,f_1}$ in case $(a)$ or $(c)$, or $0\geqslant c\geqslant-c_{f_2,f_1}$ in case $(d)$, which is a local minimum of $J_{f_2,f_1,c}$. Although we are not yet sure whether $J_{f_2,f_1,c}$ is nondegenerate on this critical point, we can prove multiple solutions results for $|c|\neq c_{f_2,f_1}$ and $c\neq0$ by the method of \cite{li2024topological}.
			
			\tm\label{tm12}\label{tm5.15}
			Suppose that $f_2,f_1$ satisfy the conditions in Theorem~\ref{tm8} or Theorem~\ref{tm10} and $c_{f_2,f_1}$ is the constant defined in Theorem~\ref{tm5.13}. If $0<c<c_{f_2,f_1}$ in case $(a)$ or $(c)$, or $0>c>-c_{f_2,f_1}$ in case $(d)$, equation \eqref{eq:maineq} will have at least two solutions.
			\tmd
			
			\begin{proof}
				We prove this theorem by contradiction. By the discussion in Subsection~\ref{sub5.1}, we can assume that $u_0$ is the unique solution of \eqref{eq:maineq}. First, we claim that $u_0$ is an isolated local minimum. Let $c_1$ be a constant satisfying $c<c_1<c_{f_2,f_1}$ in cases $(a),(c)$ and $c>c_1>-c_{f_2,f_1}$ in case $(d)$. Thus there exists a solution $u_1$ of 
				$$-\Delta u=f_2e^{2u}+f_1e^u+c_1.$$
				Considering the minimizing problem and applying the method of sub- and supersolutions, we derive that $u_0$ is a local minimum of $J_{f_2,f_1,c}$ in $\{-M\leqslant u\leqslant u_1\}$ for cases $(a),(c)$, and in $\{u_1\leqslant u\leqslant M\}$ for case $(d)$, where $M$ is a sufficiently large constant. Note that for cases $(a),(c)$, if there exists $u_0(x')=u_1(x')$, we have
				$$-\Delta\left(u_0(x')-u_1(x')\right)=c-c_1<0.$$
				This implies that there exists some vertex $y'$ satisfying $u_0(y')-u_1(y')>0$, which leads to a contradiction. Thus, $u_0<u_1$ and there exists some $\epsilon_0>0$ such that 
				$$\inf_{u_0-\epsilon_0<u<u_0+\epsilon_0<u_1}J_{f_2,f_1,c}=J_{f_2,f_1,c}(u_0).$$   
				Since a local minimum of $J_{f_2,f_1,c}$ in $\{u_0-\epsilon_0<u<u_0+\epsilon_0<u_1\}$ must be a solution of \eqref{eq:maineq}, we obtain $u_0$ is an isolated local minimum.
				
				Let
				$$\widetilde{J}=\{J_{f_2,f_1,c}(u)\leqslant J_{f_2,f_1,c}(u_0)\}.$$
				According to \cite{chang1993infinite} (Chapter 1, Page 32), we define the $q-th$ critical group by
				$$C_q(J_{f_2,f_1,c},u_0)=H_q\left(\widetilde{J}\cap U, \left(\widetilde{J}\backslash \{u_0\}\right)\cap U, G\right),$$
				where $U$ is a neighborhood of $u_0$ and $H_{*}(X,Y;G)$ stands for the singular relative homology groups with coefficient group G of $J_{f_2,f_1,c}$ at $u_0$, say $\mathbb{Z}, \mathbb{R}$. Using the excision property of the singular homology theory in \cite{chang1993infinite}, the $q-th$ critical group is well-defined and doesn't depend on the choice of $U$. Choose 
				$$U=\{u: u_0-\epsilon_0<u<u_0+\epsilon_0\},$$
				and by the above discussion one easily sees that
				$$\widetilde{J}\cap U=\{u_0\},\ \left(\widetilde{J}\backslash \{u_0\}\right)\cap U=\varnothing.$$
				Thus from the definition of the singular relative homology groups, we obtain that
				$$C_q(J_{f_2,f_1,c},u_0)=H_q\left(\{u_0\},G\right)=\delta_{0}G.$$
				Note that if $J_{f_2,f_1,c}(u_i)\rightarrow c_0\in \mathbb{R}$ and $J_{f_2,f_1,c}'(u_i)\rightarrow 0$, we can suppose that for large $i$, 
				$$-\Delta u_i=f_2e^{2u_i}+f_1e^{u_i}+c+h_i,$$
				where $|h_i|<\frac{c}{2}$ and $h_i\rightarrow0$.
				In view of Theorem~\ref{tm1}, we deduce that $\{u_i\}$ is a uniformly bounded sequence, which leads to a convergent subsequence. These yield that $J_{f_2,f_1,c}$ satisfies the $(PS)$ condition. Applying Theorem 3.2 in \cite{chang1993infinite} (Chapter 2, Page 100), we finally get that
				$$d_{f_2,f_1,c}=\deg(DJ_{f_2,f_1,c}, B_R,0)=\sum_{q=0}^\infty (-1)^q\rank\ C_q(J_{f_2,f_1,c},u_0)=1,$$
				which contradicts with $d_{f_2,f_1,c}=0$.
			\end{proof}
			
			\section{General exponential nonlinearity}\label{se6}
			In the final section, we discuss the semilinear equations of general exponential nonlinearity. By using the same methods in Section~\ref{se3} and \ref{s4}, we can obtain a priori estimates and compute the corresponding topological degree. Consider equation \eqref{eq:maineq}
			\begin{equation*}
				\begin{aligned}
					-\Delta u=\sum_{i=1}^nf_ie^{iu}+c,
				\end{aligned}
			\end{equation*}
			where $f_n$ is not always equal to zero. Actually, following the proof in previous sections, one can check that all theorems in Section~\ref{se5} hold similarly for general exponential nonlinearity. In this section, we only list the results and provide a brief proof, and repeated proof steps will be omitted.
			
			\subsection{A Priori estimates}
			We consider a priori estimates of equation \eqref{eq:maineq}. Applying the methods in Theorem~\ref{tm1} and \ref{tm2}, one easily sees that the similar a priori estimates hold.
			
			\begin{proof}[Proof of Theorem~\ref{tm6.1}]
				The proof strategy is the same as Theorem~\ref{tm1} and Theorem~\ref{tm2}, and we only emphasize some important details. Assume that in each case, by passing to subsequences, there exist sequences $\{f_{i,m}\}$, $1\leqslant i\leqslant n$, $\{c_m\}$, satisfying
				$$\lim_{m\rightarrow+\infty}f_{i,m}=f_i,\ \lim_{m\rightarrow+\infty}c_m=c,$$
				where $\overline{f_{1,m}}$ have the same sign with respect to $m$, and $|c|\geqslant K^{-1}$ or $c=0$. The corresponding solution sequence $\{u_m\}$ is not uniformly bounded. By Theorem~\ref{tm:alternative}, we know that $u_m$ tends to $-\infty$ or $+\infty $ uniformly.

				Step1: Assume that $u_m\rightarrow-\infty$ uniformly. Then $|\Delta u_m|$ is uniformly bounded. If $c\neq0$, note that
				$$\lim_{m\rightarrow+\infty}\left(\sum_{i=1}^nf_{i,m}(x)e^{iu_m}+c_m\right)=c\neq0,$$
				which is a contradiction to $\sum_V\Delta u_m=0$. If $c=0$, since 
				$$\Delta e^{-u_m}\geqslant -e^{-u_m}\Delta u_m=\sum_{i=1}^{n-1}f_{i+1,m}e^{iu_m}+f_{1,m},$$
				we sum it over $V$ and let $m\rightarrow+\infty$, and will obtain a contradiction if $\overline{f_{1,m}}\geqslant K^{-1}$. For $\overline{f_{1,m}}\leqslant-K^{-1}$, we assume that for large $m$, 
				$$2\overline{f_{1}}\leqslant\overline{\sum_{i=1}^{n-1}f_{i+1,m}e^{iu_m}+f_{1,m}}\leqslant-\frac{1}{2}K^{-1}.$$  
				Using a priori estimates in \cite{sun2022brouwer} we know $\{u_m\}$ is uniformly bounded, which is a contradiction.
				
				Step2: Assume that $u_m\rightarrow+\infty$ uniformly. 
				
				For case $(i)$, we can derive a similar inequality to \eqref{ineq:main1}, and consider on the vertex $x_0$. Let $m\rightarrow+\infty$ and one easily sees a contradiction.
				
				For case $(ii)$, since the right hand side of \eqref{eq:maineq} is uniformly bounded above, we obtain $|\Delta u_m|$ is uniformly bounded. For each $m$, we choose a vertex $x_m$ where $D_{F_n}(x_m)\leqslant-K^{-1}$. Since 		     $$\lim_{m\rightarrow+\infty}\left(\sum_{i=1}^nf_{i,m}(x_m)e^{iu_m(x_m)}+c_m\right)=-\infty,$$
				we obtain a contradiction with the uniform bound of $|\Delta u_m|$.
			\end{proof}
			
			\subsection{Topological degree}
			Recall that we define
			$$H_{f_n,\dots,f_1,c}(u)=\Delta u+\sum_{i=1}^nf_ie^{iu}+c.$$
			For convenience, we write
			$H_{n,c}=H_{f_n,\dots,f_1,c}$ and let
			$$d_{n,c}=\lim_{R\rightarrow+\infty} \deg \left(H_{n,c},B_R,0\right).$$
			Consider the following cases.
			\begin{enumerate}[($a^*$)]
				\item There exists some $x_0\in V$ satisfying $D_{F_n}(x_0)>0$.
				\item $f_i\leqslant0$ for any integer $1\leqslant i\leqslant n$.             	
				\item $D_{F_n}(x)\leqslant0$. There exists some $f_i$ which is positive somewhere.
			\end{enumerate}
			For $0\leqslant k\leqslant n-2$, We denote by
			$$A_k=\{x\in V: f_n(x)=f_{n-1}(x)=\dots=f_{n-k}(x)=0\}.$$
			Without losing generality, we suppose that $m\equiv1$. Applying the methods in Section~\ref{s4}, we can obtain the following theorems.
			
			\tm\label{tm6.2}
			Suppose that $u$ is a solution to \eqref{eq:maineq} and $c\neq 0$. Then we have
			\begin{equation*}
				\begin{aligned}
					d_{n,c}=\left\{
					\begin{aligned}
						1\ ,\ \ \   &c>0\ \text{in}\ (b^*)\ \text{and}\ (c^*),\\
						-1,\ \ \ &c<0\ \text{in}\ (a^*),\\
						0,\ \ \ &\text{otherwise}.\\
					\end{aligned}
					\right.
				\end{aligned}
			\end{equation*}
			\tmd
			
			\begin{proof}
				The proof strategy is the same as Theorem~\ref{tm3}, and we only need to note that in case $(c^*)$, let
				$$f_{n,t}(x)=\left\{
				\begin{aligned}
					&(1-t)f_n(x)-1\ ,  &f_n(x)<0,\\
					&\ 0, &f_n(x)=0,\\
				\end{aligned}
				\right.$$
				and for $0\leqslant k\leqslant n-2$, 
				$$f_{n-k-1,t}(x)=\left\{
				\begin{aligned}
					(1-t)f_{n-k-1}(x)-1\ ,\ \ \   &x\in A_k \ \text{and}\ f_{n-k-1}(x)<0,\\
					(1-t)f_{n-k-1}(x)\ \ \ \ \ \  ,\ \ \ &\text{otherwise}.\\
				\end{aligned}
				\right.$$
				Then we return back to case $(b^*)$, and one can easily repeat the proof and compute the topological degree.
			\end{proof}
			
			\tm\label{tm6.3}
			Suppose that $u$ is a solution to \eqref{eq:maineq} and $\overline{f_1}\neq 0,\ c= 0$. Then we have
			\begin{equation*}
				\begin{aligned}
					d_{n,c}=\left\{
					\begin{aligned}
						1 &, \  \ \overline{f_1}>0 \ \text{in}\ (c^*),\\
						-1&, \ \  \overline{f_1}<0 \ \text{in}\ (a^*),\\
						0&, \ \ \text{otherwise}.
					\end{aligned}
					\right.
				\end{aligned}
			\end{equation*}
			\tmd
			
			\begin{proof}
				For each case, we do the same homotopy transformation on $f_i$ for $2\leqslant i\leqslant n$ as in Theorem~\ref{tm6.2}. Theorem~\ref{tm6.1} requires that we should keep $\overline{f_1}$ away from zero when doing homotopy transformations. Thus, in case $(a^*)$, we transform $f_1$ as case $(c)$ in Theorem~\ref{tm4} if $D_{F_n}(x_0)$ is exactly $f_1(x_0)$. Otherwise, we let 
				$f_{1,t}=
				(1-t)f_{1}(x)-1$
				when $\overline{f_1}<0$, and $f_{1,t}=
				(1-t)f_{1}(x)+1$
				when $\overline{f_1}>0$.             
				In case $(b^*)$, equation \eqref{eq:maineq} must have no solutions since $-\Delta u\leqslant0$. In case $(c^*)$, we note that after homotopy transformations, for each vertex $x\in V$, there exists at most one function $f_i$ with a non-zero value for $2\leqslant i\leqslant n$, such that $f_i(x)=-1$. If this $f_i$ exists, we label this index $i$ as $i(x)$. Then when $\overline{f_1}<0$ we let
				$$f_{1,t}(x)=\left\{
				\begin{aligned}
					&(1-t)f_{1}(x)\ ,\ \ \   &x\in A_{n-2}, \\
					&(1-t)f_{1}(x)-1,\ \ \ &x\notin A_{n-2},\\
				\end{aligned}
				\right.$$
				and one easily sees that $f_{i,1}\leqslant0$, which implies that $d_{n,c}=0$. When $\overline{f_1}>0$ we let
				$$f_{1,t}(x)=\left\{
				\begin{aligned}
					&(1-t)f_{1}(x)\ ,\ \ \   &x\in A_{n-2}, \\
					&(1-t)f_{1}(x)+1,\ \ \ &x\notin A_{n-2}.\\
				\end{aligned}
				\right.$$
				By applying Lemma~\ref{lm3}, we remove the vertices where $f_{n,1}=f_{n-1,1}=\dots=f_{1,1}=0$, and obtain a new equation:
				$$-\Delta u(x)=-e^{i(x)u}+e^u.$$
				This equation only has one solution $u=0$. Thus we obtain the topological degree by 
				$$d_{n,c}=\sgn \det \left(-\Delta+I^*\right)=1,$$
				where $I^*$ is a diagonal matrix with diagonal elements $i(x)-1\geqslant1$.
			\end{proof}
			
			\subsection{Existence results}
			The Kronecker existence theorem implies the existence of solutions of \eqref{eq:maineq} if $d_{n,c}\neq0$. In addition, we easily see that the equation has no solutions when $c\leqslant0$ in case $(b^*)$. Thus we only discuss the cases:
			\begin{enumerate}[($A^*$)]
				\item $c\geqslant 0$ in cases $(a^*)$.
				\item $c\leqslant 0$ in cases $(c^*)$.
			\end{enumerate}
			By the same process in Subsection~\ref{5.2} and the method of sub- and supersolutions, we know that $u=-M\ (\text{resp.}\ M)$ is a subsolution $(\text{resp. supersolution})$ of \eqref{eq:maineq} in case $(A^*)\ (\text{resp. }\ (B^*))$, where $M$ is a sufficient large positive constant. There also exists a $c_{f_1,\dots,f_n}$, which may be zero, a positive constant or infty, such that the equation has a solution when $c\in (0,c_{f_1,\dots,f_n}]$ in case $(A^*)$, $c\in [-c_{f_1,\dots,f_n},0)$ in case $(B^*)$. For convenience, we write $c_n=c_{f_1,\dots,f_n}$.
			
			First we have the following theorem.
			
			\tm\label{tm6.4}
			Suppose that $\overline{f_1}<0$ in case $(A^*)$, and $\overline{f_1}>0$ in case $(B^*)$. Then equation \eqref{eq:maineq} has at least one solution when $c$ is sufficiently close to zero. 
			\tmd
			
			\begin{proof}
				We choose a function $v:V\rightarrow \mathbb{R}$ which satisfies
				$$\Delta v=\sum_{i=1}^na^{i-1}\left(f_i-\overline{f_i}\right),$$
				where $a>0$ sufficiently small. The remaining proof steps are the same as Theorem~\ref{tm8}, and one can check the details.
			\end{proof}
			
			For the rest situations, we need to  add some restrictions to $f_i$, $1\leqslant i\leqslant n$, and we can derive the existence of solutions.
			
			\tm\label{tm6.5}
			We suppose that  $\overline{f_1}\geqslant0$ in case $(A^*)$, and   $\overline{f_1}\leqslant0$ in case $(B^*)$. For each case, there exists $f_i^*$, $2\leqslant i\leqslant n$, which depends on $f_1,f_2,\dots, f_{i-1},f_{i+1},\dots,f_n$. Then if $c$ is sufficiently close to zero, we derive that
			\begin{enumerate}[(a)]
				\item  Equation \eqref{eq:maineq} has a solution when $f_j\leqslant f_j^*$ in cases $(a^*)$, where $j$ is a integer satisfying $2\leqslant j\leqslant n$.
				\item Equation \eqref{eq:maineq} has a solution when $f_j\geqslant f_j^*$ in case $(c^*)$, where $j$ is a integer satisfying $2\leqslant j\leqslant n-1$.
			\end{enumerate}
			\tmd
			
			\begin{proof}
				Consider the following equation:
				$$-\Delta u=He^u,$$
				where $H$ changes sign and $\overline{H}<0$. This equation has a solution $u^*$. We define
				$$\epsilon_0=\frac{1}{2}\min_{H(x)\neq0 \atop x\in V }|H(x)|,\ \epsilon=\epsilon_0\min_{x\in V}e^{u^*(x)}.$$
				Note that replace $u$ with $u-k$, where $k$ is a constant, and equation \eqref{eq:maineq} is equivalent to 
				$$-\Delta u=\sum_{i=1}^nf_ie^{-ik}e^{iu}+c.$$
				In cases $(A^*)$, we only need to construct a supersolution, and the subsolution is constructed by using the methods in the proof of Theorem~\ref{tm9}. In case $(B^*)$, we need to construct a  subsolution. 
				
				In case $(A^*)$, let
				$$ f_j^*=e^{jk}e^{-(j-1)u^*}\left(H-\epsilon_0-\sum_{\substack{i=1\\i\neq j}}^{n}f_ie^{-ik}e^{(i-1)u^*}\right),$$
				and in case $(B^*)$, we let
				$$ f_j^*=e^{jk}e^{-(j-1)u^*}\left(H+\epsilon_0-\sum_{\substack{i=1\\i\neq j}}^{n}f_ie^{-ik}e^{(i-1)u^*}\right).$$
				In case $(A^*)$, we consider $f_j\leqslant f_j^*$ and for $c\leqslant \epsilon$, we obtain
				$$-\Delta u^*=He^{u^*}\geqslant(H-\epsilon_0)e^{u^*}+c\geqslant \sum_{i=1}^nf_ie^{-ik}e^{iu}+c,$$ 
				which implies that $u^*$ is a supersolution. In fact, by letting $k$ sufficiently large and choosing different bounded $H$, we can claim the existence of $f_j^*$, which can change sign and its symbols can be freely selected. By the same method, we can construct a supersolution for $f_j\geqslant f_j^*$ in case $(c^*)$.
				
			\end{proof}
			
			\begin{remark}
				We need to point out that in Theorem~\ref{tm6.5}, for case $(c^*)$, the reason we do not take $j=n$ is that we can consider the following example.
				
				Choose $f_i$ $(1\leqslant i\leqslant n-1)$ such that 
				$$\sum_{i=1}^{n-1}f_ie^{it}<0$$
				holds for any $t$. Thus, 
				$$-\Delta u=\sum_{i=1}^{n}f_ie^{iu}+c< 0,$$
				which implies the nonexistence of solutions, and it is independent of the choice of $f_n$ in case $(c^*)$.
			\end{remark}
			
			Combining Theorem~\ref{tm6.4} and ~\ref{tm6.5}, we derive that $c_n>0$ if $f_i$ satisfies the conditions in one of these two theorems. Under these assumptions, in the following we prove that $c_n<+\infty$, and if $c\in (0,c_n)$ in case $(A^*)$ or $c\in (-c_n,0)$ in case $(B^*)$, equation \eqref{eq:maineq} must have at least two solutions.
			
			\tm\label{tm6.6}
			Suppose that for $1\leqslant i\leqslant n$, $f_i$ satisfies the conditions in Theorem~\ref{tm6.4} or Theorem~\ref{tm6.5}. Then there exists a positive constant $c_n$ depending on $f_i$, $G$, and we have the following results:
			\begin{enumerate}[(a)]
				\item In case $(A^*)$, equation \eqref{eq:maineq} has at least two  solutions if $0< c< c_n$, has at least one solution if $c=0$ or $c=c_n$, and has no solutions if $c>c_n$.
				\item In case $(B^*)$, equation \eqref{eq:maineq} has at least two  solutions if $-c_n< c< 0$, has at least one solution if $c=0$ or $c=-c_n$, and has no solutions if $c<-c_n$. 			  	
			\end{enumerate}
			
			\tmd
			
			\begin{proof}
				Based on the discussion at the end of the proof of Theorem~\ref{tm11}, we only need to prove that $c_n<+\infty$ and the multiple solutions results. First we suppose that $c_n=+\infty$, which implies that for any $c>0$ in case $(a^*)$, any $c<0$ in case $(c^*)$, equation \eqref{eq:maineq} is solvable. 
				
				In case $(a^*)$, consider sufficiently large $c$, and we will derive a solution $u$. On the vertex $x_0$ where $u$ attains its minimum, we note that
				$$-K\sum_{i=1}^ne^{iu}+c\leqslant \sum_{i=1}^nf_ie^{iu}+c=-\Delta u(x_0)\leqslant0,$$
				where 
				$$K=\max_{x\in V \atop 1\leqslant i\leqslant n}|f_i(x)|.$$
				Let $c>nK$, we obtain that
				$$\min_{x\in V}u(x)\geqslant 0.$$
				By the assumptions in case $(a^*)$, we know there exists some $1\leqslant i\leqslant n$ and $x_1\in V$ such that
				$$-\Delta u(x_1)=f_ie^{iu}+\dots+f_1e^u+c,$$
				where $f_i(x_1)>0$. Since
				$$-\Delta u(x_1)=\frac{1}{m(x_1)}\sum_{y\in V}\omega_{x_1y}(u(x_1)-u(y))\leqslant \frac{1}{m(x_1)}\sum_{y\in V}\omega_{x_1y}u(x_1).$$
				we let $c\rightarrow+\infty$, and one easily sees the contradiction.
				
				In case $(c^*)$, the assumptions in this case imply that
				$$ \sum_{i=1}^nf_ie^{iu}\leqslant K_n,$$
				where $K_n$ is a constant independent of $u$. Let $c<-K_n$, and we deduce that
				$$-\Delta u\leqslant K_n+c<0,$$
				which derives that equation \eqref{eq:maineq} has no solutions.
				
				Applying the methods in Theorem~\ref{tm5.15} and repeating the similar process, we can prove the  multiple solutions results for $0< c< c_n$ in case $(A^*)$ or $-c_n< c< 0$ in case $(B^*)$.
			\end{proof}

			\bigskip
			\bigskip
			\textbf{Conflicts of Interests.} The authors declared no potential conflicts of interests with respect to this article.
			\bigskip

			\bibliographystyle{alpha}
			\bibliography{ckwx}

\begin{thebibliography}{CHMY94}

\bibitem[BM91]{brezis1991uniform}
Ha{\"\i}m Brezis and Frank Merle.
\newblock Uniform estimates and blow--up behavior for solutions of $-\delta
  (u)= v (x)e^u$ in two dimensions.
\newblock {\em Communications in partial differential equations},
  16(8-9):1223--1253, 1991.

\bibitem[BP94]{berman1994nonnegative}
Abraham Berman and Robert~J Plemmons.
\newblock {\em Nonnegative matrices in the mathematical sciences}.
\newblock SIAM, 1994.

\bibitem[CFL02]{chan2002non}
Hsungrow Chan, Chun-Chieh Fu, and Chang-Shou Lin.
\newblock Non-topological multi-vortex solutions to the self-dual
  {C}hern-{S}imons-{H}iggs equation.
\newblock {\em Communications in mathematical physics}, 231(2):189--221, 2002.

\bibitem[CH23]{chao2023multiple}
Ruixue Chao and Songbo Hou.
\newblock Multiple solutions for a generalized {C}hern-{S}imons equation on
  graphs.
\newblock {\em Journal of Mathematical Analysis and Applications},
  519(1):126787, 2023.

\bibitem[Cha93]{chang1993infinite}
Kung-ching Chang.
\newblock Infinite dimensional morse theory and multiple solution problems.
\newblock {\em Birkh{\"a}user}, 1993.

\bibitem[Cha05]{chang2005methods}
Kung-Ching Chang.
\newblock {\em Methods in nonlinear analysis}, volume~10.
\newblock Springer, 2005.

\bibitem[CHMY94]{chen1994nonlinear}
Xinfu Chen, Stuart Hastings, John~Bryce McLeod, and Yisong Yang.
\newblock A nonlinear elliptic equation arising from gauge field theory and
  cosmology.
\newblock {\em Proceedings of the Royal Society of London. Series A:
  Mathematical and Physical Sciences}, 446(1928):453--478, 1994.

\bibitem[CHS22]{chao2022existence}
Ruixue Chao, Songbo Hou, and Jiamin Sun.
\newblock Existence of solutions to a generalized self-dual {C}hern-{S}imons
  system on finite graphs.
\newblock {\em arXiv preprint arXiv:2206.12863}, 2022.

\bibitem[CI00]{chae2000existence}
Dongho Chae and Oleg~Yu Imanuvilov.
\newblock The existence of non-topological multivortex solutions in the
  relativistic self-dual {C}hern-{S}imons theory.
\newblock {\em Communications in Mathematical Physics}, 215:119--142, 2000.

\bibitem[CKL11]{choe2011existence}
Kwangseok Choe, Namkwon Kim, and Chang-Shou Lin.
\newblock Existence of self-dual non-topological solutions in the
  {C}hern--{S}imons {H}iggs model.
\newblock {\em Annales de l'Institut Henri Poincar{\'e} C}, 28(6):837--852,
  2011.

\bibitem[CL93a]{chen1993gaussian}
Wenxiong Chen and Congming Li.
\newblock Gaussian curvature on singular surfaces.
\newblock {\em The Journal of Geometric Analysis}, 3(4):315--334, 1993.

\bibitem[CL93b]{chen1993qualitative}
Wenxiong Chen and Congming Li.
\newblock Qualitative properties of solutions to some nonlinear elliptic
  equations in $\mathbb{R}^2$.
\newblock {\em Duke Mathematical Journal}, 71(2):427, 1993.

\bibitem[CL03]{chen2003topological}
Chiun-Chuan Chen and Chang-Shou Lin.
\newblock Topological degree for a mean field equation on {R}iemann surfaces.
\newblock {\em Communications on pure and applied mathematics},
  56(12):1667--1727, 2003.

\bibitem[CM22]{camilli2022note}
Fabio Camilli and Claudio Marchi.
\newblock A note on {K}azdan--{W}arner equation on networks.
\newblock {\em Advances in Calculus of Variations}, 15(4):693--704, 2022.

\bibitem[CY95]{caffarelli1995vortex}
Luis~A Caffarelli and Yisong Yang.
\newblock Vortex condensation in the {C}hern-{S}imons {H}iggs model: an
  existence theorem.
\newblock {\em Communications in mathematical physics}, 168:321--336, 1995.

\bibitem[DJLW97]{ding1997differential}
W.~Ding, J.~Jost, J.~Li, and G.~Wang.
\newblock The differential equation {$\Delta u = 8\pi - 8\pi he^u$} on a
  compact {R}iemann surface.
\newblock {\em The Asian Journal of Mathematics}, 1:230--248, 1997.

\bibitem[DJLW99]{ding1999existence}
Weiyue Ding, J{\"u}rgen Jost, Jiayu Li, and Guofang Wang.
\newblock Existence results for mean field equations.
\newblock {\em Annales de l'Institut Henri Poincar{\'e} C, Analyse non
  lin{\'e}aire}, 16(5):653--666, 1999.

\bibitem[FZ18]{fang2018class}
Yu~Fang and Mengjie Zhang.
\newblock On a class of {K}azdan--{W}arner equations.
\newblock {\em Turkish Journal of Mathematics}, 42(5):2400--2416, 2018.

\bibitem[Ge17]{ge2017kazdan}
Huabin Ge.
\newblock {K}azdan--{W}arner equation on graph in the negative case.
\newblock {\em Journal of Mathematical Analysis and Applications},
  453(2):1022--1027, 2017.

\bibitem[Ge20]{ge2020p}
Huabin Ge.
\newblock The $p$-th {K}azdan--{W}arner equation on graphs.
\newblock {\em Communications in Contemporary Mathematics}, 22(06):1950052,
  2020.

\bibitem[GH23]{gao2023existence}
Jia Gao and Songbo Hou.
\newblock Existence theorems for a generalized {C}hern--{S}imons equation on
  finite graphs.
\newblock {\em Journal of Mathematical Physics}, 64(9), 2023.

\bibitem[GHJ18]{ge2018note}
Huabin Ge, Bobo Hua, and Wenfeng Jiang.
\newblock A note on {L}iouville type equations on graphs.
\newblock {\em Proceedings of the American Mathematical Society},
  146(11):4837--4842, 2018.

\bibitem[GJ18]{ge2018kazdan}
Huabin Ge and Wenfeng Jiang.
\newblock {K}azdan--{W}arner equation on infinite graphs.
\newblock {\em Journal of the Korean Mathematical Society}, 55(5):1091--1101,
  2018.

\bibitem[GLY16]{grigor2016kazdan}
Alexander Grigor’yan, Yong Lin, and Yunyan Yang.
\newblock {K}azdan--{W}arner equation on graph.
\newblock {\em Calculus of Variations and Partial Differential Equations},
  55:1--13, 2016.

\bibitem[HHW23]{hua2023existence}
Bobo Hua, Genggeng Huang, and Jiaxuan Wang.
\newblock The existence of topological solutions to the {C}hern-{S}imons model
  on lattice graphs.
\newblock {\em arXiv preprint arXiv:2310.13905}, 2023.

\bibitem[HK24]{hou2024topological}
Songbo Hou and Xiaoqing Kong.
\newblock On topological solutions to a generalized {C}hern-{S}imons equation
  on lattice graphs.
\newblock {\em arXiv preprint arXiv:2410.18407}, 2024.

\bibitem[HKP90]{hong1990multivortex}
Jooyoo Hong, Yoonbai Kim, and Pong~Youl Pac.
\newblock Multivortex solutions of the abelian {C}hern-{S}imons-{H}iggs theory.
\newblock {\em Physical Review Letters}, 64(19):2230, 1990.

\bibitem[HLY20]{huang2020existence}
An~Huang, Yong Lin, and Shing-Tung Yau.
\newblock Existence of solutions to mean field equations on graphs.
\newblock {\em Communications in mathematical physics}, 377(1):613--621, 2020.

\bibitem[HQ24]{hou2024solutions}
Songbo Hou and Wenjie Qiao.
\newblock Solutions to a generalized {C}hern--{S}imons {H}iggs model on finite
  graphs by topological degree.
\newblock {\em Journal of Mathematical Physics}, 65(8), 2024.

\bibitem[HS22]{hou2022existence}
Songbo Hou and Jiamin Sun.
\newblock Existence of solutions to {C}hern--{S}imons--{H}iggs equations on
  graphs.
\newblock {\em Calculus of Variations and Partial Differential Equations},
  61(4):139, 2022.

\bibitem[Hu22]{hu2022existence}
Yuanyang Hu.
\newblock Existence of solutions to a generalized self-dual {C}hern-{S}imons
  equation on finite graphs.
\newblock {\em arXiv preprint arXiv:2202.02525}, 2022.

\bibitem[HWY21]{huang2021mean}
Hsin-Yuan Huang, Jun Wang, and Wen Yang.
\newblock Mean field equation and relativistic {A}belian {C}hern-{S}imons model
  on finite graphs.
\newblock {\em Journal of Functional Analysis}, 281(10):109218, 2021.

\bibitem[JT80]{jaffee1980vortices}
Arthur Jaffee and Clifford Taubes.
\newblock Vortices and monopoles. structure of static gauge theories.
\newblock {\em Progress in Physics}, 2, 1980.

\bibitem[KS18]{keller2018kazdan}
Matthias Keller and Michael Schwarz.
\newblock The {K}azdan--{W}arner equation on canonically compactifiable graphs.
\newblock {\em Calculus of Variations and Partial Differential Equations},
  57:1--18, 2018.

\bibitem[KW74]{kazdan1974curvature}
Jerry~L Kazdan and Frank~W Warner.
\newblock Curvature functions for compact 2-manifolds.
\newblock {\em Annals of Mathematics}, 99(1):14--47, 1974.

\bibitem[Liu22]{liu2022brouwer}
Yang Liu.
\newblock Brouwer degree for mean field equation on graph.
\newblock {\em Bulletin of the Korean Mathematical Society}, 59(5):1305--1315,
  2022.

\bibitem[LSY24]{li2024topological}
Jiayu Li, Linlin Sun, and Yunyan Yang.
\newblock Topological degree for {C}hern--{S}imons {H}iggs models on finite
  graphs.
\newblock {\em Calculus of Variations and Partial Differential Equations},
  63(4):81, 2024.

\bibitem[LY20]{liu2020multiple}
Shuang Liu and Yunyan Yang.
\newblock Multiple solutions of {K}azdan--{W}arner equation on graphs in the
  negative case.
\newblock {\em Calculus of Variations and Partial Differential Equations},
  59:1--15, 2020.

\bibitem[LY22]{lin2022calculus}
Yong Lin and Yunyan Yang.
\newblock Calculus of variations on locally finite graphs.
\newblock {\em Revista Matem{\'a}tica Complutense}, pages 1--23, 2022.

\bibitem[LZ21]{lu2021existence}
Yingshu L{\"u} and Peirong Zhong.
\newblock Existence of solutions to a generalized self-dual {C}hern-{S}imons
  equation on graphs.
\newblock {\em arXiv preprint arXiv:2107.12535}, 2021.

\bibitem[LZ23]{li2023existence}
Yi~Li and Qianwei Zhang.
\newblock Existence of solutions to a class of {K}azdan-{W}arner equations on
  finite graphs.
\newblock {\em arXiv preprint arXiv:2308.10002}, 2023.

\bibitem[PS22]{pinamonti2022existence}
Andrea Pinamonti and Giorgio Stefani.
\newblock Existence and uniqueness theorems for some semi-linear equations on
  locally finite graphs.
\newblock {\em Proceedings of the American Mathematical Society},
  150(11):4757--4770, 2022.

\bibitem[Ron91]{ronggang1991existence}
Wang Ronggang.
\newblock The existence of {C}hern-{S}imons vortices.
\newblock {\em Communications in Mathematical Physics;(Germany, FR)}, 137(3),
  1991.

\bibitem[Sun24]{sun2024sinh}
Linlin Sun.
\newblock {S}inh-{G}ordon equations on finite graphs.
\newblock {\em arXiv preprint arXiv:2406.17166}, 2024.

\bibitem[SW22]{sun2022brouwer}
Linlin Sun and Liuquan Wang.
\newblock Brouwer degree for {K}azdan-{W}arner equations on a connected finite
  graph.
\newblock {\em Advances in Mathematics}, 404:108422, 2022.

\bibitem[SY95]{spruck1995topological}
Joel Spruck and Yisong Yang.
\newblock Topological solutions in the self-dual {C}hern-{S}imons theory:
  existence and approximation.
\newblock {\em Annales de l'Institut Henri Poincar{\'e} C, Analyse non
  lin{\'e}aire}, 12(1):75--97, 1995.

\bibitem[ZLY24]{zhang2024fractional}
Mengjie Zhang, Yong Lin, and Yunyan Yang.
\newblock Fractional laplace operator on finite graphs.
\newblock {\em arXiv preprint arXiv:2403.19987}, 2024.

\end{thebibliography}

			\end{document}